\newcommand{\bs}{\boldsymbol}
\newcommand{\p}{\mathbb P}
\newcommand{\e}{\mathbb E}
\newcommand{\D}{\mathrm d}
\newcommand{\M}{\mathcal{M}}
\newcommand{\levy}{L\'{e}vy }
\newcommand{\C}{\mathbb C}
\newcommand{\R}{\mathbb R}
\renewcommand{\Re}{{\rm Re}}
\newcommand{\Cpos}{\mathbb C^{\Re>0}}
\newcommand{\Cneg}{\mathbb C^{\Re<0}}
\newcommand{\diag}{\mathrm {diag}}
\newcommand{\ind}[1]{\mbox{\large  1}_{\{#1\}}}
\renewcommand{\a}{\alpha}
\newcommand{\lmb}{\lambda}
\newcommand{\Lmb}{\Lambda}
\newcommand{\G}{\Gamma}
\newcommand{\ba}{{\bs a}}
\newcommand{\bsg}{{\bs \sigma}}
\newcommand{\bv}{{\bs v}}
\newcommand{\bx}{{\bs x}}
\newcommand{\be}{{\bs e}}
\newcommand{\bu}{{\bs u}}
\newcommand{\bl}{{\bs \ell}}
\newcommand{\bh}{{\bs h}}
\newcommand{\bk}{{\bs k}}
\newcommand{\1}{{\bs 1}}
\newcommand{\0}{{\bs 0}}
\newcommand{\bpi}{{\bs \pi}}
\newcommand{\bup}{{\mbox{$\bu_+$}}}
\newcommand{\bud}{{\mbox{$\bu_\downarrow$}}}
\newcommand{\blp}{{\mbox{$\bl_+$}}}
\newcommand{\blm}{{\mbox{$\bl_-$}}}
\newcommand{\blu}{{\mbox{$\bl_\uparrow$}}}
\newcommand{\Ep}{{\mbox{$E_+$}}}
\newcommand{\Em}{{\mbox{$E_-$}}}
\newcommand{\Eu}{{\mbox{$E_\uparrow$}}}
\newcommand{\Ed}{{\mbox{$E_\downarrow$}}}
\newcommand{\Np}{{\mbox{$N_+$}}}
\newcommand{\Nm}{{\mbox{$N_-$}}}
\newcommand{\Nu}{{\mbox{$N_\uparrow$}}}
\newcommand{\Nd}{{\mbox{$N_\downarrow$}}}
\newcommand{\matI}{\mathbb{I}}
\newcommand{\matO}{\mathbb{O}}
\newcommand{\Lmbp}{\mbox{$\Lmb^+$}}
\newcommand{\Lmbm}{\mbox{$\Lmb^-$}}
\newcommand{\Gp}{\G^+}
\newcommand{\Gm}{\G^-}
\newcommand{\Vpp}{V^+_+}
\newcommand{\Vmm}{V^-_-}
\renewcommand{\t}{'}
\newtheorem{theorem}{Theorem}
\newtheorem{lemma}[theorem]{Lemma}
\newtheorem{proposition}[theorem]{Proposition}
\newtheorem{corollary}[theorem]{Corollary}
\theoremstyle{definition}
\newtheorem{definition}{Definition}
\begin{document}
\bibliographystyle{plain}
\title{First passage process of a {M}arkov additive process, \\with applications to reflection problems}
\author{Bernardo D'Auria\footnotemark[1]
, Jevgenijs Ivanovs\footnotemark[2]
, Offer Kella\footnotemark[3]\hspace{4pt} and Michel Mandjes\footnotemark[2]}
\footnotetext[1]{Universidad Carlos III de Madrid, Avda Universidad 30, 28911 Leganes (Madrid), Spain.}
\footnotetext[2]{E{\sc urandom}, P.O. Box 513, 5600 MB Eindhoven, the Netherlands; Korteweg-de Vries Institute for Mathematics, University of Amsterdam, Science Park 904, 1098 XH Amsterdam, the Netherlands.}
\footnotetext[3]{Department of Statistics, The Hebrew University of Jerusalem, Jerusalem 91905, Israel.}
\date{\today\ (first version: June 19, 2009)}
 \maketitle
\begin{abstract}
In this paper we consider the first passage process of a
spectrally negative Markov additive process (MAP).
The law of this process is uniquely characterized by
a certain matrix function, which plays a crucial role in fluctuation theory.
We show how to identify this matrix using the theory of Jordan chains
associated with analytic matrix functions. Importantly, our result also provides
us with a technique,
which can be used to derive various further identities. We
then proceed to show how to compute the stationary distribution
associated with a one-sided reflected (at zero) MAP for both the
spectrally positive and spectrally negative cases as well as for
the two sided reflected Markov-modulated Brownian motion; these
results can be interpreted in terms of queues with MAP input.
\end{abstract}

\vspace{0.1in}\noindent \hspace{-0.1in}\begin{tabular}{l l}
AMS 2000 Subject classification:&Primary 60K25\\&Secondary 60K37\end{tabular}\\
Keywords: \levy processes; fluctuation theory; Markov additive
processes; Markov-modulated Brownian motion

\normalsize

\section{Introduction}
Continuous-time Markov additive processes (MAPs) with one-sided
jumps have proven to be an important modelling tool in various
application areas, such as communications networking \cite[Ch.\
6-7]{prabhu} and finance
\cite{asmussen:phaselevy2004,jobertrogers:optionmod2006}. Over the
past decades a vast body of literature has been developed; see for
instance \cite[Ch.\ XI]{asmussen:apq} for a collection of results.
A MAP can be thought of as
a \levy process whose Laplace exponent depends on the state of a
(finite-state) Markovian background process (with additional jumps
at transition epochs of this background process). It is a
non-trivial generalization of the standard \levy process, with
many analogous properties and characteristics, as well as new
mathematical objects associated to it, posing new challenges.
Any \levy process is characterized by a Laplace exponent $\psi(\a)$; its
counterpart for MAPs is the matrix exponent
$F(\a)$, which is essentially a multi-dimensional analogue of $\psi(\a)$.

In this paper we consider the {\em first passage process} $\tau_x$
defined as the first time the process exceeds level $x$.
We concentrate on the case of a  \emph{spectrally negative} MAP (that is,
all jumps are negative), so that  the first passage process
is a MAP itself. Knowledge of the matrix exponent of this process,
which we in the sequel denote by the matrix function $\Lambda(q)$,
is of crucial interest when addressing related fluctuation
theory issues. Indeed it can be considered as the multi-dimensional generalization of $-\Phi(q)$, where $\Phi(q)$ is the (one-dimensional) right-inverse  of $\psi(\a)$, as given in \cite[Eqn.\ (3.15)]{kyprianou}.
Our main result concerns the identification of the matrix function $\Lambda(q)$ in
terms of the matrix exponent $F(\a)$ of the original MAP. We provide the
Jordan normal form of $\Lambda(q)$ relying  on the theory of Jordan chains
associated with analytic matrix functions.

Importantly, our main result is not only about identification of the
matrix exponent of the first passage process. We prefer to see our contribution rather as the development of a new
{\em technique}: the theory of analytic matrix functions, combined with
the special structure of the Jordan pairs of
$F(\a)-q\matI$ (with $\matI$ being the identity
matrix), and their relation to the matrix
$\Lambda(q)$, enables the derivation of a set of further identities. These identities, such as~(\ref{eq:idnty1}) and~(\ref{eq:idnty2}), then play an important role
in the solution of a number of problems related to the {\em reflection} of the MAP; here `reflection' amounts to an adaptation
of the MAP in order to ensure that the process attains values in a certain subset of ${\mathbb R}$ only.
In this sense, we could reflect  at 0 to obtain a process that assumes
nonnegative values only (which can be interpreted as a {\em queue} with infinite buffer capacity), or at both 0 and $b>0$ to restrict the process to $[0,b]$
(this {\em double reflection}  is essentially a queue with finite buffer $b$).  In
the following we discuss in more detail these reflection problems, which we
solved using this technique.

In \cite{asm_kella_multidimensional}  a martingale associated with
the MAP was established, being essentially the multidimensional
counterpart of the martingale found in \cite{kellawhitt} for the standard
\levy process. In the same paper, various situations were
considered in which this martingale could be applied. Most
notably, attention was paid to reflection of a {\em spectrally positive}
MAP at 0, i.e.,
a queue fed by a {spectrally
positive} MAP; here `spectrally positive' means that all jumps are positive. For this model the
multidimensional martingale allowed to reduce the problem of
determining the stationary workload to finding various constants
that are solutions to a  system of linear equations. The
authors of \cite{asm_kella_multidimensional} did not succeed,
however, in proving that such a system of equations has a unique
solution. Also for the situation of doubly reflected
Markov-modulated Brownian motion, in the following denoted by MMBM,
%(that is, a finite-buffer queue fed by Markov-modulated Brownian motion),
a similar complication arose.
In the literature, problems of this type were only partially addressed for
special cases
(e.g., see \cite{asmussen:mmbm, extremes_dieker, karandikar:kulkarni:1995, kella_stadje}).
In our paper we tackle these problems using the technique outlined above.

This paper is organized as follows. Section 2 reviews some main
results from analytic matrix function theory, while in Section 3
we identify the matrix exponent $\Lmb(q)$ by relating the Jordan
pairs of the matrix functions $F(\a)-q\matI$ and $\a\matI+\Lmb(q)$
for a fixed $q\geq 0$. This result, which is
Theorem~\ref{thm_main} and which can be considered as the main
contribution of our work, is explicit in the sense that it is
given in terms of computable quantities associated with $F(\a)$.
As mentioned above, it in addition leads to a technique which can be used to obtain
further important identities. This technique is discussed in
Section 4 and is then applied in Sections 5-8, where we solve a
number of open problems related to reflected processes. In Section
5 we study spectrally one-sided MAPs reflected at 0. In
particular, both for spectrally negative and spectrally positive
MAP input, we express the steady-state workload in terms of
quantities related to the matrix exponent $\Lambda(q)$ of the
first passage process of a spectrally negative MAP. The result of
Section 5.1 is not new and is here for completeness. In Section
5.2 we succeed in solving the (above mentioned) issues that
remained open in \cite{asm_kella_multidimensional}. In Section 6
we apply the methodology to identify the stationary distribution
of Markov-modulated Brownian motion with two reflecting barriers.
We provide a full treatment of this model, also for the case of 0
asymptotic drift. In Section 7 we identify the so-called scale functions associated with MMBM,
which is, to our best knowledge, a new result.
Yet another demonstration of applicability of our technique is
given in Section 8, where we present a simple proof of the fact
that $\Lmb(q)$ is a unique solution of a certain matrix integral
equation. This result (in different degrees of generality) appears
in \cite{asmussen:mmbm, extremes_dieker, miyazawa, pacheco_prabhu,
pistorius,rogers}, and is commonly considered as the main tool
used to numerically identify $\Lmb(q)$. Some spectral
considerations (under the assumption that $\Lmb(q)$ has distinct
eigenvalues) can be found in~\cite{asmussen:mmbm,
extremes_dieker,pacheco_prabhu}. Finally, an iterative method to
compute $\Lmb(q)$ can be found in~\cite{breuer}.

In order to point out the contribution of our work as precisely as possible,
we feel that the following comment is important.
It should be realized that it is the connection between MAPs and the theory of
Jordan chains that enables the most general treatment of the problem. As we
mentioned above, we do not need to assume that certain systems of equations have
a unique solution, and hence we feel that from a mathematical perspective our
framework is more natural than those used before. One could argue, however, that
in practical situations the uniqueness requirement will virtually always be
fulfilled. This concern is valid if one is only concerned with the
computation of $\Lambda(q)$, but the importance of our result becomes apparent
when considering applications such as those presented in Section 5-8. For
example, the special but important case of the MMBM with zero drift immediately
leads to a non-simple
eigenvalue $0$. In this case an additional equation associated to the null Jordan
chain is required to obtain the solution, see~(\ref{eq:2sided_0drift})
and~(\ref{eq:scale_lost}). Finally, the proofs of identities such
as~(\ref{eq:idnty1}), (\ref{eq:idnty2}), (\ref{eq:C_D}) and (\ref{eq:mat_eq})
can be obtained in a routine way using the technique presented in Section 4,
which we can do only due to the full generality of the main result; this was not
possible before.

The remainder of this section is devoted to the definition of the quantities of
interest, with a focus on spectrally negative MAPs and their first passage process.

\subsection{Spectrally negative MAP}
A MAP is a bivariate Markov process
$(X(t),J(t))$ defined as follows. Let
$J(\cdot)$ be an irreducible continuous-time Markov chain with
finite state space $E=\{1,\ldots,N\}$, transition rate matrix
$Q=(q_{ij})$ and a (unique) stationary distribution $\bpi$. For each
state $i$ of $J(\cdot)$ let $X_i(\cdot)$ be a \levy process with
Laplace exponent $\psi_i(\a)=\log(\e e^{\a X_i(1)})$. Letting
$T_n$ and $T_{n+1}$ be two successive transition epochs of
$J(\cdot)$, and given that $J(\cdot)$ jumps from state $i$ to
state $j$ at $T_n$, we define the additive process $X(\cdot)$ in
the time interval $[T_n,T_{n+1})$ through
\begin{equation}
X(t)=X(T_n-)+U_{ij}^n+[X_j(t)-X_j(T_n)],
\end{equation}
where $(U_{ij}^n)$ is a sequence of independent and identically
distributed random variables with moment generating function
\begin{equation}
\tilde{G}_{ij}(\alpha) = \e e^{\a
U_{ij}^1},\quad \hbox{ where } \quad U_{ii}^1 \equiv 0,
\end{equation}
describing the jumps at transition epochs.
To make the MAP spectrally negative, it is required that
$U_{ij}^1\leq 0$ (for all $i,j\in\{1,\ldots,N\}$) and that
$X_i(\cdot)$ is allowed to have only negative jumps (for all
$i\in\{1,\ldots,N\}$). As a consequence, the moment generation functions $\tilde{G}_{ij}(\alpha)$ are well defined
for $\alpha\ge 0.$

A \levy process is called a \emph{downward subordinator} if it has
non-increasing paths a.s. We denote the subset of indices of $E$ corresponding to such processes by $\Ed$.
Let also $\Ep=E\backslash\Ed$, $\Nd=|\Ed|$ and $\Np=|\Ep|$. It is convenient to assume that $\Ep=\{1,\ldots,\Np\}$, which we do throughout this work.
We use $\bv_+$ and $\bv_\downarrow$ to
denote the restrictions of a vector $\bv$ to the indices from
$\Ep$ and $\Ed$ respectively. Finally, in order to
exclude trivialities it is assumed that $\Np>0$.

Define the matrix $F(\a)$ through
\begin{equation}\label{generator}
F(\a)=Q\circ\tilde G(\a)+\diag[\psi_1(\a),\ldots,\psi_N(\a)],
\end{equation}
where $\tilde G(\a)=(\tilde G_{ij}(\a))$; for matrices $A$ and
$B$ of the same dimensions we define  $A\circ B=(a_{ij}b_{ij})$. One can see
that in the absence of positive jumps $F(\a)$ is analytic on
$\Cpos=\{\a\in\C:\Re(\a)>0\}$. Moreover, it is known that
\begin{equation}\label{eq:init}
\e_i[e^{\a X(t)};J(t)=j]:=\e_i[e^{\a X(t)}\ind{J(t)=j}]=(e^{F(\a)t})_{ij},
\end{equation}
cf.\ \cite[Prop.\ XI.2.2]{asmussen:apq}, where $\e_i(\cdot)$
denotes expectation given that $J(0)=i.$
We also write $\e[e^{\a X(t)};J(t)]$ to denote the matrix with $ij$-th element given in~(\ref{eq:init}).
Hence $F(\a)$ can be seen
as the multi-dimensional analog of a Laplace exponent, defining
the law of the MAP. In the following we call $F(\a)$the  \emph{matrix exponent} of the MAP $(X(t),J(t))$.

An important quantity
associated to a MAP is the \emph{asymptotic drift}:
\begin{equation}
\kappa=\lim_{t\rightarrow\infty}\frac{1}{t}\e_i
X(t)=\sum_i\pi_i\left(\psi_i'(0)+\sum_{j\neq i}q_{ij}\tilde
G_{ij}'(0)\right),
\end{equation}
which does not depend on the initial state $i$ of
$J(t)$~\cite[Cor.\ XI.2.7]{asmussen:apq}. Finally for $q\geq 0$ we
define $F^q(\a)=F(\a)-q\matI$, with $\matI$ being the identity
matrix, which can be seen as the matrix exponent of the MAP `killed' at random time $e_q$:
\begin{equation}
\e[e^{\a X(t)};t<e_q,J(t)]=e^{(F(\a)-q\matI)t},
\end{equation}
where $e_q$ is an exponential random variable of rate $q$
independent of everything else and $e_0\equiv \infty$ by
convention.

\subsection{First Passage Process}
Define the first passage time over level $x>0$ for the (possibly
killed) process $X(t)$ as
\begin{equation}\tau_x=\inf\{t\geq 0: X(t)> x\}.\end{equation}
It is known that on $\{J(\tau_x)=i\}$ the process $(X(t+\tau_x)-X(\tau_x),J(t+\tau_x)),t\geq 0$
is independent from $(X(t),J(t)),t\in[0,\tau_x]$ and has the same law as the original process under $\p_i$.
Therefore, in the absence of positive jumps the
time-changed process $J(\tau_x)$ is a time-homogeneous Markov
process and hence is a Markov chain. Letting $\{\partial\}$ be an
absorbing state corresponding to $J(\infty)$, we note that
$J(\tau_x)$ lives on $\Ep\cup\{\partial\}$, because $X(t)$ can not
hit new maximum when $J(t)$ is in a state corresponding to a
downward subordinator; see also \cite{palmowski_fluctuations}.
Let $\Lmb(q)$ be the $\Np\times \Np$ dimensional transition rate
matrix of $J(\tau_x)$ restricted to $\Ep$, that is
\begin{equation}\label{lambda_matrix}\p(J(\tau_x)=j,\tau_x<e_q\mid J(\tau_0)=i)=(e^{\Lmb(q) x})_{ij},\hbox{
where }i,j\in \Ep.\end{equation}
It is easy to see that in the absence of positive jumps the first passage process $(\tau_x,J(\tau_x)), x\geq 0$
is a MAP itself. Moreover,  \[\e[e^{-q \tau_x};J(\tau_x)=j\mid J(\tau_0)=i]
=\p(J(\tau_x)=j,\tau_x<e_q\mid J(\tau_0)=i)=(e^{\Lmb(q) x})_{ij},\]
so that $\Lmb(q)$ is the matrix exponent of (the negative of) the first passage
process. This interpretation, however, is not used in the rest of this paper.

Another matrix of interest is
$N\times \Np$ matrix $\Pi(q)$ defined by
\begin{equation}\label{pi_matrix}
\Pi(q)_{ij}=\p_i(J(\tau_0)=j,\tau_0<e_q),\hbox{ where }i\in E\hbox{
and }j\in \Ep.
\end{equation}
This matrix specifies initial distributions of the time-changed
Markov chain $J(\tau_x)$. Note also that $\Pi(q)$ restricted to the
rows in $\Ep$ is the identity matrix, because $\tau_0=0$ a.s.\
when $J(0)\in\Ep$ \cite[Thm.\ 6.5]{kyprianou}.
We note that the case of $q=0$ is a special case corresponding to no killing.
In order to simplify notation we often write $\Lmb$ and $\Pi$ instead of
$\Lmb(0)$ and $\Pi(0)$.

It is noted that if $q>0$ or $q=0,\kappa<0$ then $\Lmb(q)$ is a
defective transition rate matrix:
  $\Lmb(q) \1_+\leq \0_+$, with at least one strict inequality.
If, however, $\kappa\geq 0$, then $\Lmb$ is a non-defective
transition rate matrix:  $\Lmb \1_+=\0_+$; also $\Pi \1_+=\1$.
These statements follow trivially from~\cite[Prop.\
XI.2.10]{asmussen:apq}. Finally, note that $\Lmb$ is an
irreducible matrix, because so is $Q$. Hence if $\Lmb$ is
non-defective then by Perron-Frobenius theory \cite[Thm.\
I.6.5]{asmussen:apq} the eigenvalue $0$ is simple, because it is
the eigenvalue with maximal real part.

It is instructive to consider the `degenerate' MAP, i.e., the one with dimension $N=1$. Such a MAP is just a \levy process,
and $\Lambda(q)=-\Phi(q)$, where $\Phi(q)$ is the right-inverse of $\psi(\a),\a\geq 0$.
Note also that $\Lmb$ being non-defective (and hence singular) corresponds to $\Phi(0)=0$.

\section{Preliminaries}
In this section we review some basic facts from analytic matrix function theory.
Let $A(z)$ be an analytic matrix function ($n
\times n$ dimensional), defined on some
domain $D\subset \C$, where it is assumed that $\det(A(z))$ is not identically zero on
this domain. For any $\lmb \in D$  we can write
\begin{equation}A(z)=\sum_{i=0}^\infty \frac{1}{i!}A^{(i)}(\lmb) (z-\lmb)^i,\end{equation}
where $A^{(i)}(\lmb)$ denotes the $i$-th derivative of $A(z)$
at~$\lmb$. We say that $\lmb$ is an \emph{eigenvalue} of $A(z)$ if
$\det(A(\lmb))=0$.
\begin{definition}
We say that vectors $\bv_0,\ldots,\bv_{r-1}\in\C^n$ with
$\bv_0\neq \0$ form a \emph{Jordan chain} of $A(z)$ corresponding
to the eigenvalue $\lmb$ if
\begin{equation}\label{dfn:jordan}
\sum_{i=0}^j
\frac{1}{i!}A^{(i)}(\lmb)\bv_{j-i}=\0,\hspace{0.5in}j=0,\ldots,r-1.
\end{equation}
\end{definition}
Note that this definition is a generalization of the well-known
notion of a Jordan chain for a square matrix $A$. In this classical
case $A(z)=z \matI - A$, and~(\ref{dfn:jordan}) reduces to
\begin{equation}\label{eq:jord.ch}
A\bv_0=\lmb \bv_0,\hspace{0.2in}A\bv_1=\lmb \bv_1+\bv_0,\hspace{0.2in}\ldots,\hspace{0.2in}A\bv_{r-1}=\lmb \bv_{r-1}+\bv_{r-2}.
\end{equation}

The following result is well known~\cite{gohberg_rodman} and is an immediate consequence of (\ref{eq:jord.ch}).
\begin{proposition}\label{jordan_lemma}Let $\bv_0,\ldots,\bv_{r-1}$ be a Jordan chain of $A(z)$ corresponding to the eigenvalue $\lambda$, and let $C(z)$ be $m\times n$ dimensional matrix.
If $B(z)=C(z)A(z)$ is $r-1$
times differentiable at $\lmb$, then
\begin{equation}\label{eq:jordan_lemma}\sum_{i=0}^j\frac{1}{i!} B^{(i)}(\lmb)\bv_{j-i}=\0,\hspace{0.5in}j=0,\ldots,r-1.\end{equation}
\end{proposition}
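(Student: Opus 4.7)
The plan is to unpack both sides as Taylor data at $\lambda$, use the factorization $B=CA$, and reduce the identity to the defining relation (\ref{dfn:jordan}) of the Jordan chain applied to $A$. Concretely, I would apply the Leibniz rule to $B=CA$ to write
$$B^{(i)}(\lambda) \;=\; \sum_{k=0}^{i} \binom{i}{k}\, C^{(k)}(\lambda)\, A^{(i-k)}(\lambda),$$
substitute this into $\sum_{i=0}^{j}\tfrac{1}{i!} B^{(i)}(\lambda)\bv_{j-i}$, and use the identity $\binom{i}{k}/i! = 1/(k!(i-k)!)$ to obtain a clean double sum.

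\textbf{Execution.} Switching the order of summation by setting $\ell = i-k$, the constraints $0\le k\le i\le j$ become $k,\ell\ge 0$ with $k+\ell\le j$, and the double sum factorizes as
$$\sum_{k=0}^{j}\frac{1}{k!}\,C^{(k)}(\lambda)\,\bigg(\sum_{\ell=0}^{j-k}\frac{1}{\ell!}\,A^{(\ell)}(\lambda)\,\bv_{j-k-\ell}\bigg).$$
For each fixed $k\in\{0,\ldots,j\}$ the inner bracket is precisely the $(j-k)$-th identity in the Jordan-chain definition (\ref{dfn:jordan}) applied to $A$; since $j-k\le j\le r-1$ it vanishes, so the whole expression is $\0$. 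This establishes (\ref{eq:jordan_lemma}) for every $j=0,\ldots,r-1$.

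\textbf{Main obstacle.} The combinatorics is straightforward once the inner sum is matched to (\ref{dfn:jordan}); the only delicate point is regularity, since the hypothesis is imposed on the product $B=CA$ rather than on $C$ alone, and Leibniz as used above strictly requires $C$ to be $r-1$ times differentiable at $\lambda$ as well. In the analytic matrix-function setting of this paper that regularity is present implicitly. A hypothesis-faithful alternative I would include if pressed is to introduce the polynomial $\bv(z):=\sum_{k=0}^{r-1}\bv_k(z-\lambda)^k$, observe that (\ref{dfn:jordan}) is equivalent to the statement that the first $r$ Taylor coefficients at $\lambda$ of $A(z)\bv(z)$ vanish (so that, by analyticity of $A$, one may write $A(z)\bv(z)=(z-\lambda)^r\,\tilde w(z)$ with $\tilde w$ analytic), and then read the conclusion as the vanishing of the first $r$ Taylor coefficients of $B(z)\bv(z)=(z-\lambda)^r\,C(z)\tilde w(z)$, which follows directly from the assumed $(r-1)$-fold differentiability of $B$ at $\lambda$.
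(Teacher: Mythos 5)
Your main Leibniz computation---expand $B^{(i)}(\lmb)=\sum_k\binom{i}{k}C^{(k)}(\lmb)A^{(i-k)}(\lmb)$, interchange the sums, and recognize the inner sums as the defining relations (\ref{dfn:jordan}) for $A$---is correct and is precisely the ``immediate'' argument the paper has in mind: the paper gives no proof at all, citing Gohberg--Rodman, where $C(z)$ is analytic. Under that intended reading (or merely $C$ being $r-1$ times differentiable at $\lmb$, which is what Leibniz needs and which does hold where the proposition is applied in Lemma~\ref{eig_lemma}, since $C(\a)$ there is smooth on $\Cpos$ by the same dominated-convergence argument used for $B(\a)$), your proof is complete, and you were right to flag the regularity of $C$ as the one delicate point.

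The ``hypothesis-faithful alternative'' you offer, however, has a genuine gap at its final step. From $B(z)\bv(z)=(z-\lmb)^r C(z)\tilde w(z)$ and the $(r-1)$-fold differentiability of $B$ at $\lmb$ alone, the vanishing of the first $r$ Taylor coefficients of $B(z)\bv(z)$ does \emph{not} follow ``directly'': the factor $C(z)\tilde w(z)$ is not known to be bounded, let alone continuous, near $\lmb$, and the factorization by itself says nothing about derivatives of $B\bv$ at $\lmb$. In fact, with no condition whatsoever on $C$ the proposition is false as literally stated: take $m=n=1$, $A(z)=z^2$, $\lmb=0$, the Jordan chain $\bv_0=1$, $\bv_1=0$ (length $r=2$, matching the multiplicity of the zero of $\det A$), and $C(z)=1/z$ for $z\neq0$ with $C(0)=0$; then $B(z)=C(z)A(z)=z$ is entire, yet $B(0)\bv_1+B'(0)\bv_0=1\neq0$. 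So any proof must use some regularity of $C$. The alternative can be rescued by assuming $C$ locally bounded (e.g.\ continuous) near $\lmb$: then $B(z)\bv(z)=O(|z-\lmb|^{r})=o(|z-\lmb|^{r-1})$, and since $B\bv$ is $r-1$ times differentiable at $\lmb$, the Taylor--Young expansion with Peano remainder forces all coefficients up to order $r-1$ to vanish. In short: either state the proposition with $C$ sufficiently differentiable at $\lmb$ and use your Leibniz proof, or add local boundedness of $C$ and finish the second argument with this Taylor--Young step; as written, the last sentence of your alternative asserts a conclusion that the stated hypotheses do not support.
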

Note that if $B(z)$ is a square matrix then
$\bv_0,\ldots,\bv_{r-1}$ is a Jordan chain of $B(z)$ corresponding
to the eigenvalue~$\lmb$. It is, however, not required that
$C(z)$ and $B(z)$ be square matrices.

Let $m$ be the multiplicity of $\lmb$ as a zero of $\det(A(z))$
and $p$ be the dimension of the null space of $A(\lmb)=A_0$. It
is known, see e.g.~\cite{gohberg_rodman}, that there exists a
\emph{canonical system of Jordan chains} corresponding to
$\lmb$
\begin{equation}
\bv_0^{(k)},\bv_1^{(k)},\ldots,\bv_{r_k-1}^{(k)},\hspace{0.5in} k=1,\ldots,p,
\end{equation}
such that the vectors $\bv_0^{(1)},\ldots,\bv_0^{(p)}$ form the
basis of the null space of $A_0$ and $\sum_{i=1}^p r_i=m$. We
write such a canonical system of Jordan chains in matrix form:
\begin{equation}\label{eg:jordan_pair}
V=[
\bv_0^{(1)},\bv_{1}^{(1)},\ldots,\bv_{r_1-1}^{(1)},
\ldots,
\bv_0^{(p)},\bv_{1}^{(p)},\ldots,\bv_{r_p-1}^{(p)}
],\hspace{0.2in}
\Gamma=\diag[\Gamma^{(1)},\ldots,\Gamma^{(p)}],\end{equation}
where $\Gamma^{(i)}$ is the Jordan block of size $r_i\times r_i$ with
eigenvalue~$\lmb$, i.e. a square matrix having zeros everywhere except along the diagonal, whose elements are equal to $\lmb$, and the superdiagonal, whose elements are equal to $1$.
\begin{definition}
A pair of matrices $(V,\Gamma)$ given by~$(\ref{eg:jordan_pair})$ is
called a \emph{Jordan pair} of $A(z)$ corresponding to the
eigenvalue~$\lmb$.
\end{definition}
We note that, unlike in the classical case, the vectors
forming a Jordan chain are {\em not} necessarily linearly independent;
furthermore a Jordan chain may contain a null vector.

We conclude this section with a result on entire functions of matrices defined
through
\begin{equation}\label{eq:entire}
f(M)=\sum_{i=0}^\infty \frac{1}{i!}f^{(i)}(0)M^i,
\end{equation}
for an entire function $f:\C\rightarrow\C$ and a square matrix $M$. The next
lemma will be important for applications.
\begin{lemma}\label{jordan_lemma_2}
Let $f:\C\rightarrow\C$ be an entire function and let $\Gamma$ be a Jordan block of size $k$ with $\lmb$ on the diagonal, then for an arbitrary set of vectors $\bv_0,\ldots,\bv_{k-1}$ the
$(j+1)$-st column of the matrix $[\bv_0,\ldots,\bv_{k-1}]f(\Gamma)$ equals
\begin{equation}\sum_{i=0}^j\frac{1}{i!}f^{(i)}(\lmb)\bv_{j-i},\end{equation}
where $j=0,\ldots,k-1$.
\end{lemma}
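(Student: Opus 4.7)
The plan is to compute $f(\Gamma)$ explicitly by exploiting the nilpotent part of $\Gamma$, and then read off the $(j+1)$-st column of the product. Write $\Gamma=\lmb\matI+N$, where $N$ is the $k\times k$ nilpotent shift matrix with ones on the superdiagonal and zeros elsewhere. The key identity I would aim to establish is
\begin{equation*}
f(\Gamma)=\sum_{i=0}^{k-1}\frac{1}{i!}f^{(i)}(\lmb)N^i,
\end{equation*}
after which the claim follows from a direct bookkeeping of columns.

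To derive this identity from the defining series $f(M)=\sum_i\frac{1}{i!}f^{(i)}(0)M^i$, I would first expand $\Gamma^i=(\lmb\matI+N)^i$ by the binomial theorem (the factors commute), substitute into the series, and interchange the order of summation in the resulting double sum. Grouping the terms according to the power of $N$ and using the fact that $N^\ell=0$ for $\ell\geq k$, the inner series in $\lmb$ can be recognized as the Taylor series of $f^{(\ell)}$ at $0$ evaluated at $\lmb$, namely $f^{(\ell)}(\lmb)=\sum_{i\geq \ell}\frac{i!}{\ell!(i-\ell)!}f^{(i)}(0)\lmb^{i-\ell}\cdot \frac{\ell!}{i!}$; absolute convergence of the power series of $f$ on compacts justifies the rearrangement. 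This gives the stated closed form for $f(\Gamma)$.

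For the final step, I would note that $N^i$ acts on standard basis vectors by $N^i\be_m=\be_{m-i}$ when $m>i$ and equals $\0$ otherwise (indexing columns from $1$). Hence the $(j+1)$-st column of $N^i$ is $\be_{j+1-i}$ for $0\leq i\leq j$ and $\0$ for $i>j$. Therefore the $(j+1)$-st column of the product $[\bv_0,\ldots,\bv_{k-1}]N^i$ is $\bv_{j-i}$ for $i\leq j$ and $\0$ otherwise. Summing over $i$ with the weights $\frac{1}{i!}f^{(i)}(\lmb)$ gives exactly $\sum_{i=0}^j\frac{1}{i!}f^{(i)}(\lmb)\bv_{j-i}$, as required.

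The only mildly delicate point, and where I expect the main (though still modest) obstacle to lie, is in justifying the interchange of summations to pass from the definitional power series around $0$ to the Taylor expansion around $\lmb$; this is essentially the standard fact that entire functions admit Taylor expansions around any point, lifted to matrix arguments via absolute entrywise convergence, so it should cause no real trouble.
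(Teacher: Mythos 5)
Your proposal is correct. The paper itself gives no argument at all: it simply invokes a known result (Thm.\ 6.6 of the cited work on analytic functions of matrices), which is precisely the standard upper--triangular Toeplitz formula $f(\Gamma)=\sum_{i=0}^{k-1}\frac{1}{i!}f^{(i)}(\lmb)N^i$ for a Jordan block $\Gamma=\lmb\matI+N$. What you do differently is prove that formula from scratch: binomial expansion of $(\lmb\matI+N)^i$, interchange of summation (harmless here, since $N^\ell=\matO$ for $\ell\geq k$ leaves only finitely many values of $\ell$, and the entire function's power series converges absolutely), and recognition of the inner series as the Taylor expansion of $f^{(\ell)}$ about $0$ evaluated at $\lmb$. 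Your final column bookkeeping, using $N^i\be_{j+1}=\be_{j+1-i}$ for $i\leq j$ and $\0$ otherwise, is exactly the step that turns the cited formula into the statement of the lemma. So the mathematical content coincides with the paper's; your version is merely self-contained where the paper delegates to a reference, which costs a little length but removes the external dependence.
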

\begin{proof}
Immediate from~\cite[Thm.\ 6.6]{analytic_functions_of_mat}.
\end{proof}

\section{Jordan Normal Form of $\Lmb(q)$}
In this section we consider a spectrally negative MAP
$(X(t),J(t))$ with matrix exponent $F(\a)$ and asymptotic drift
$\kappa$. Let $\lmb_1,\ldots,\lmb_k$ be the eigenvalues of
$F^q(\a)$, to be understood as the zeros of $\det(F^q(\a))$, for a given $q\ge 0$, in its region of
analyticity $\Cpos$. Let $(V_i,\G_i)$ be a Jordan pair
corresponding to the eigenvalue $\lmb_i$. Define the matrices $V$
and $\G$ in the following way:
\begin{equation}\label{eq:Vdef}
  \begin{array}{rll}
    V=&[V_1,\ldots,V_k]\\
    \G=&\diag[\G_1,\ldots,\G_k] &\quad \hbox{if }q>0\hbox{ or }q=0,\kappa< 0; \vspace{.25cm} \\
    V=&[\1,V_1,\ldots,V_k]\\
    \G=&\diag[0,\G_1,\ldots,\G_k]& \quad \hbox{if }q=0,\kappa\geq 0.
  \end{array}
\end{equation}
and let the matrices $V_+$ and $V_\downarrow$ be the restrictions
of the matrix $V$ to the rows corresponding to $\Ep$ and
$\Ed$ respectively.

\begin{theorem}\label{thm_main}
It holds that $\G$ and $V_+$ are $\Np\times
  \Np$-dimensional matrices, $V_+$ is invertible, and
\begin{equation}\label{lmb_dec}
\Lmb(q)=-V_+\G V_+^{-1},\hspace{0.5in}\Pi(q)=VV_+^{-1}.
\end{equation}
\end{theorem}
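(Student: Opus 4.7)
The plan is to produce, from each Jordan chain of $F^q(\a)$ in $\Cpos$, a family of martingales that can be stopped at the first-passage time $\tau_x$, and then to combine the resulting identities with a zero-count for $\det F^q(\a)$ and an invertibility argument for $V_+$. The starting observation is that for any Jordan chain $\bv_0,\ldots,\bv_{r-1}$ of $F^q(\a)$ at an eigenvalue $\lmb\in\Cpos$, condition~(\ref{dfn:jordan}) is equivalent to
\[
F^q(\a)\,\bh^\a \;=\; O\bigl((\a-\lmb)^r\bigr),\qquad \bh^\a\defeq\sum_{i=0}^{r-1}(\a-\lmb)^i\bv_i,
\]
so that $e^{F^q(\a)t}\bh^\a = \bh^\a + O((\a-\lmb)^r)$. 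Combined with $\e[e^{\a X(t)-qt};J(t)]=e^{F^q(\a)t}$, this shows that the process $e^{\a X(t)-qt}\,\bh^\a_{J(t)}$ is a martingale modulo an error of order $(\a-\lmb)^r$; expanding it as a power series in $(\a-\lmb)$ and extracting the coefficients of $(\a-\lmb)^0,\ldots,(\a-\lmb)^{r-1}$ then produces $r$ honest martingales whose coefficients depend only on $\bv_0,\ldots,\bv_{r-1}$ and on $X(t),J(t)$.

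I would then apply optional stopping at $\tau_x$. Because the MAP has no positive jumps, $X(\tau_x)=x$ on $\{\tau_x<\infty\}$; integrability is supplied by the killing factor $e^{-q\tau_x}$ when $q>0$, by $X(t)\to-\infty$ together with $\Re\lmb>0$ on $\{\tau_x=\infty\}$ when $q=0,\kappa<0$, and directly by $\tau_x<\infty$ a.s.\ when $q=0,\kappa\geq 0$. Packaging the $r$ resulting scalar stopping identities through Lemma~\ref{jordan_lemma_2}, applied with $f(z)=e^{zx}$ to the Jordan block $\G_i$, yields the matrix identity
\begin{equation}\label{eq:planmain}
\Pi(q)\,e^{\Lmb(q)x}\,V_i^+\,e^{\G_i x} \;=\; V_i, \qquad x\geq 0,
\end{equation}
for each Jordan pair $(V_i,\G_i)$. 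In the regime $q=0,\kappa\geq 0$ the extra pair $(\1,0)$ is handled by the trivial martingale $1=\1_{J(t)}$, which at $\tau_x$ yields $\Pi\,\1_+=\1$, the $x=0$ instance of~(\ref{eq:planmain}) for this block. Concatenating~(\ref{eq:planmain}) over all pairs and restricting to the rows indexed by $\Ep$, on which $\Pi(q)$ is the identity, collapses it to $e^{\Lmb(q)x}V_+\,e^{\G x}=V_+$; differentiating at $x=0$ gives $\Lmb(q)V_+ = -V_+\G$, while evaluating~(\ref{eq:planmain}) at $x=0$ on all rows gives $\Pi(q)V_+ = V$.

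The main obstacle is then the squareness and invertibility of $V_+$. Squareness amounts to showing that the total algebraic multiplicity of the zeros of $\det F^q(\a)$ inside $\Cpos$ equals $\Np$ (respectively $\Np-1$ when $q=0,\kappa\geq 0$, where the appended pair $(\1,0)$ restores the count to $\Np$), which I would obtain via a Rouché/argument-principle count on a large right-half-plane contour, exploiting the asymptotic domination of $\det F^q(\a)$ by $\prod_{i\in\Ep}(\psi_i(\a)-q)$ as $|\a|\to\infty$ in $\Cpos$. For invertibility, $\Pi(q)V_+=V$ together with $\Pi(q)$ acting as the identity on $\Ep$-rows gives $\ker V_+=\ker V$, so it suffices to show that the columns of $V$ are linearly independent. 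The relation $\Lmb(q)V_+=-V_+\G$ makes $\ker V_+$ a $\G$-invariant subspace, so any nonzero element of $\ker V_+$ would contain an eigenvector of $\G$---a nontrivial combination of leading Jordan vectors $\bv_0^{(k)}$ at some eigenvalue $\lmb_j$ whose $\Ep$-restriction vanishes, equivalently, a nonzero element of $\ker F^q(\lmb_j)$ supported on $\Ed$. Ruling this out is the genuinely delicate step: I would establish it by a Gershgorin-type argument showing that the $\Ed\times\Ed$ sub-block of $F^q(\lmb_j)$ is strictly diagonally dominant and hence invertible, using $\Re\psi_i(\lmb_j)\leq 0$ for $i\in\Ed$ (a consequence of the downward-subordinator form of $\psi_i$, since $X_i(1)\leq 0$) combined with either $q>0$ or strict negativity of $\Re\psi_i(\lmb_j)$ in the $q=0$ case. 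Once $V_+$ is invertible, (\ref{eq:planmain}) rearranges directly to the claimed $\Lmb(q)=-V_+\G V_+^{-1}$ and $\Pi(q)=VV_+^{-1}$.
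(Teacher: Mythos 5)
Your first half is sound and runs parallel to the paper's: your Taylor-coefficient (Wald-type) martingales extracted from $e^{\a X(t)-qt}\bh^\a_{J(t)}$ are a legitimate substitute for the Asmussen--Kella integral martingale that the paper stops at $t\wedge\tau_x\wedge e_q$ and then feeds into Prop.~\ref{jordan_lemma}; both routes deliver exactly the relations $\Pi(q)V_+=V$ and $\Lmb(q)V_+=-V_+\G$, and your handling of the limiting/integrability issues (killing for $q>0$, $X(t)\to-\infty$ on $\{\tau_x=\infty\}$ when $q=0,\kappa<0$) matches the care taken in the paper's Lemma~\ref{eig_lemma}.

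The genuine gaps are in the second half. First, the zero count. The statement that $\det F^q(\a)$ has exactly $\Np$ zeros in $\Cpos$ for $q>0$, and $\Np-\ind{\kappa\geq 0}$ for $q=0$, is not a routine Rouch\'e exercise: it is the main theorem of the separate paper \cite{zeros}, and the present paper does not reprove it --- it cites \cite[Thm.~1]{zeros} for $q>0$ and then obtains the $q=0$ case by letting $q_n\downarrow 0$, using convergence of the zeros (\cite[Thm.~10]{zeros}) together with the fact that all eigenvalues of $\Lmb$ lie in the open left half-plane except a simple Perron--Frobenius eigenvalue at $0$ when $\kappa\geq 0$; this is precisely how Corollary~\ref{thm_zeros} strengthens \cite[Thm.~2]{zeros}. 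Your large-contour domination argument cannot work as described: the count for $q=0$ differs by exactly one according to the sign of $\kappa$, a purely local phenomenon at the boundary point $\a=0$ (where $\det F(0)=\det Q=0$ sits on the contour), which is invisible to asymptotics of $\det F^q(\a)$ as $|\a|\to\infty$; and even for $q>0$ the comparison function and the estimate along the imaginary axis need the full strength of \cite{zeros}. Second, your invertibility step: the reduction of $\ker V_+\neq\{0\}$ to a nonzero vector of $\ker F^q(\lmb_j)$ vanishing on $\Ep$ is correct, but the proposed plain Gershgorin bound on the $\Ed\times\Ed$ block fails when $q=0$: the paper explicitly allows states with $X_i\equiv 0$ (constant states, cf.\ the remark $\Eu\cap\Ed\neq\emptyset$ in Section 6) and zero transition jumps, for which $\Re\psi_i(\lmb_j)=0$ and $|\tilde G_{ij}(\lmb_j)|=1$, so strict row dominance is lost; one would need an irreducible-dominance/reachability argument (every class inside $\Ed$ must lead to $\Ep$) or a probabilistic one. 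The paper sidesteps this entirely: since the restricted vectors form classical Jordan chains of the single matrix $-\Lmb(q)$ and $\Pi(q)V_+=V$ forces their leading vectors to be linearly independent, linear independence of all columns of $V_+$ follows from \cite[Prop.~1.3.4]{gohberg:lancaster:rodman:2006} --- an argument you already have all the ingredients for and which avoids the delicate $q=0$ analysis.
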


We start by establishing a lemma, which can be considered as a
weak analog of Thm.~\ref{thm_main}.
\begin{lemma}\label{eig_lemma}
If $\bv^0,\ldots,\bv^{r-1}$ is a Jordan chain of $F^q(\a)$
corresponding to the eigenvalue $\lmb\in\Cpos$ then
$\bv^0_+,\ldots,\bv^{r-1}_+$ is a Jordan chain of $\a \matI
+\Lmb(q)$ corresponding to the eigenvalue~$\alpha=\lmb$ and
$\Pi(q) \bv^i_+=\bv^i$ for $i=0,\ldots,r-1$.
\end{lemma}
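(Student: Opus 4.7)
The plan is to construct, from the given Jordan chain, a family of MAP-martingales $M_0,\dots,M_{r-1}$ whose optional stopping at $\tau_x$ encodes both parts of the conclusion. Since the MAP is spectrally negative, $X(\tau_x)=x$ on $\{\tau_x<\infty\}$, so stopping converts the martingale identity into a functional relation among the $\bv^i$, their truncations $\bv^i_+$, the matrix $\Pi(q)$, and the semigroup $e^{\Lmb(q)x}$; evaluating at $x=0$ will yield $\Pi(q)\bv^i_+=\bv^i$, and differentiating in $x$ at $x=0$ will produce the Jordan chain condition for $\a\matI+\Lmb(q)$.

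To build the martingales, set $\bu(\theta):=\sum_{i=0}^{r-1}(\theta-\lmb)^i\bv^i$. The Jordan chain relation~(\ref{dfn:jordan}) is precisely $F^q(\theta)\bu(\theta)=O((\theta-\lmb)^r)$; combined with the variation-of-parameters formula $e^{F^q(\theta)t}\bu(\theta)=\bu(\theta)+\int_0^t e^{F^q(\theta)s}F^q(\theta)\bu(\theta)\,\D s$ this gives $\big[\partial_\theta^j\,e^{F^q(\theta)t}\bu(\theta)\big]_{\theta=\lmb}=j!\,\bv^j$ for $j<r$, independently of $t$. Differentiating inside $\e_i[e^{\theta X(t)-qt}u_{J(t)}(\theta)]=(e^{F^q(\theta)t}\bu(\theta))_i$ produces the candidate processes
\begin{equation*}
M_j(t)\;=\;e^{\lmb X(t)-qt}\sum_{k=0}^{j}\frac{X(t)^{j-k}}{(j-k)!}\,v^k_{J(t)},\qquad j=0,\dots,r-1,
\end{equation*}
each with $\e_i M_j(t)=v^j_i$. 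Conditioning at time $s$ and expanding $(X(s)+[X(s+t)-X(s)])^{j-k}$ by the binomial theorem regroups $\e[M_j(s+t)\mid\mathcal F_s]$ into $\sum_{l=0}^{j}\frac{X(s)^{j-l}}{(j-l)!}\,e^{\lmb X(s)-qs}\,\e_{J(s)}M_l(t)$, which collapses to $M_j(s)$ by the same constant-expectation identity; so each $M_j$ is a genuine martingale.

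Next I apply optional stopping to $M_j$ at $\tau_x$. On $[0,\tau_x)$ we have $X\le x$ and $\Re\lmb>0$, so $|M_j(\cdot\wedge\tau_x)|\le C(1+|X|^{r-1})e^{\Re\lmb\,X-q(\cdot\wedge\tau_x)}$ is uniformly bounded; on $\{\tau_x=\infty\}$ the martingale vanishes as $t\to\infty$, via the factor $e^{-qt}$ when $q>0$ or via the drift $X\to-\infty$ when $q=0,\kappa<0$ (while $\tau_x<\infty$ a.s.\ when $q=0,\kappa\ge0$). Combining $X(\tau_x)=x$ with $\e_i[e^{-q\tau_x};J(\tau_x)=m]=(\Pi(q)e^{\Lmb(q)x})_{im}$ for $m\in\Ep$ yields
\begin{equation*}
\bv^j\;=\;e^{\lmb x}\sum_{k=0}^{j}\frac{x^{j-k}}{(j-k)!}\,\Pi(q)\,e^{\Lmb(q)x}\,\bv^k_+,\qquad x\ge 0.
\end{equation*}
Setting $x=0$ gives $\Pi(q)\bv^j_+=\bv^j$ (which also forces $\bv^0_+\ne\0$, else $\bv^0=\0$), and differentiating in $x$ at $x=0$ preserves only the $k=j$ and $k=j-1$ terms to produce $\Pi(q)\big[(\lmb\matI+\Lmb(q))\bv^j_++\bv^{j-1}_+\big]=\0$; since $\Pi(q)$ restricted to rows in $\Ep$ is the identity, the bracket itself must vanish, which is precisely the Jordan chain condition for $\a\matI+\Lmb(q)$ at $\a=\lmb$ (with convention $\bv^{-1}_+:=\0$).

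The main delicate step is the martingale property of $M_j$ and the validity of optional stopping; both rely on the analyticity of $\bu(\theta)$ near $\lmb$ and on the spectral assumption $\Re\lmb>0$, together with a small case analysis for $q=0$ vs.\ $q>0$ to handle the event $\{\tau_x=\infty\}$.
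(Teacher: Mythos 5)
Your proof is correct, and while it shares the paper's overall martingale-plus-optional-stopping philosophy, it takes a genuinely different route in its technical core. The paper starts from the Asmussen--Kella integral martingale of \cite{asm_kella_multidimensional}, applies optional sampling at $t\wedge\tau_x\wedge e_q$ to get the Wiener--Hopf-type identity $C(\a)F^q(\a)=B(\a)$, and transfers the Jordan chain through this product via Proposition~\ref{jordan_lemma} (only smoothness of $B$ is needed), then lets $t\to\infty$ to reach~(\ref{eq:pi_lambda}); the endgame --- evaluate at $x=0$, take the right derivative at $x=0$, and use that $\Pi(q)$ restricted to the $\Ep$-rows is the identity --- is the same as yours. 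You instead manufacture Wald-type ``derivative'' martingales $M_j$ directly from the chain via the generating function $\bu(\theta)$ (your verification that $F^q(\theta)\bu(\theta)=O((\theta-\lmb)^r)$, the constancy of $\e_i M_j(t)$, and the binomial regrouping for the martingale property are all sound), and then stop at $\tau_x$ with bounded convergence, which is legitimate since $y\mapsto|y|^m e^{\Re(\lmb)y}$ is bounded on $(-\infty,x]$; your resulting identity $\bv^j=e^{\lmb x}\sum_{k\leq j}\frac{x^{j-k}}{(j-k)!}\,\Pi(q)e^{\Lmb(q)x}\bv^k_+$ is precisely the matrix form of~(\ref{eq:pi_lambda}). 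What your route buys: genuine martingales whose stopped values directly encode $\Pi(q)$ and $e^{\Lmb(q)x}$, a built-in treatment of killing through the factor $e^{-qt}$ rather than the independent clock $e_q$, and the explicit remark that $\bv^0_+\neq\0$ (needed for the image to qualify as a Jordan chain, which the paper leaves implicit). What it costs: you must establish the martingale property and the differentiation under the expectation yourself (the paper inherits its martingale and needs only Prop.~\ref{jordan_lemma}), and you invoke $\p_i(J(\tau_x)=m,\tau_x<e_q)=(\Pi(q)e^{\Lmb(q)x})_{im}$ for general $i\in E$, which deserves the one-line strong Markov argument at $\tau_0$ (the paper sidesteps this by treating $x=0$ for all $k\in E$ and the derivative step only for $k\in\Ep$, where~(\ref{lambda_matrix}) applies directly). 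Your case analysis on $\{\tau_x=\infty\}$ ($q>0$ via $e^{-qt}\to 0$; $q=0,\kappa<0$ via $X(t)\to-\infty$) matches the paper's own caveat, so no gap there.
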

\begin{proof}
It is known from \cite[Theorem 2.1]{asm_kella_multidimensional} or by direct
applying the Dynkin's formula, see~\cite{ethier:kurtz:1986},
that for $\a\in\Cpos$
\[
M_\a(t)=\left[\int_0^{t} e^{\a X(s)}\be\t_{J(s)}\D s\right]\cdot
F(\a)+\be\t_k-e^{\a X(t)}\be\t_{J(t)},
\]
is a row vector valued zero mean martingale under the probability
measure $\p_k$, where `$\,\t\,$' denotes the transposition
operation.
Apply the optional sampling theorem to % the martingale
$M_\a(\cdot)$ %, $\a\in\Cpos$ as defined in~Lemma~\ref{lem:martingale} in the Appendix,
with the finite stopping time $t\wedge\tau_x\wedge e_q$ and note
that \[\e_k \left[e^{\a X(e_q
)}\ind{t\wedge\tau_x>e_q}\be\t_{J(e_q)}\right]=q\e_k\left[\int_0^{t\wedge\tau_x\wedge
e_q }e^{\a X(s)}\be\t_{J(s)}\D s\right],\] to obtain
\begin{equation}\label{EQ}
C(\alpha) F^q(\a)=B(\a),\end{equation} where
\begin{equation}\nonumber
C(\a)=\e_k\left[\int_0^{t\wedge \tau_x\wedge e_q} e^{\a
X(s)}\be\t_{J(s)}\D s\right],\:\:B(\a)=\e_k\left[e^{\a X(t\wedge
\tau_x)}\ind{t\wedge\tau_x<e_q}\be\t_{J(t\wedge
\tau_x)}\right]-\be\t_k.
\end{equation}
Equation (\ref{EQ}) can be seen as the Wiener-Hopf factorization for
the killed MAP $(X(t),J(t))$, see also \cite{kaspi:1982}.

Noting that $X(\cdot)\leq x$ on $[0,\tau_x]$ and using usual
dominated convergence argument we conclude that $B(\a)$ is
infinitely differentiable in $\a\in\Cpos$. Apply
Prop.~\ref{jordan_lemma} to (\ref{EQ}) to see that for all
$j=0,\ldots,r-1$ the following holds true:
\begin{equation}\nonumber
\sum_{i=0}^j\frac{1}{i!}\e_k\left[X^i(t\wedge \tau_x)e^{\lmb X(t\wedge \tau_x)}\ind{t\wedge\tau_x<e_q}\be\t_{J(t\wedge \tau_x)}\right]\bv^{j-i}-\be\t_k\bv^j=0.
\end{equation}
Letting $t\rightarrow\infty$ we obtain
\begin{equation}\label{eq:pi_lambda}\sum_{i=0}^j\frac{1}{i!}x^ie^{\lmb
x}\p_k(J(\tau_x),\tau_x<e_q)\bv^{j-i}-\be\t_k\bv^j=0,\end{equation}
where $\p_k(J(\tau_x),\tau_x<e_q)$ denotes a row vector with
$\ell$-th element given by $\p_k(J(\tau_x)=\ell,\tau_x<e_q)$. Note that
the case when $q=0$ and $\p_k(\tau_x=\infty)>0$ should be treated
with care. In this case $\kappa<0$ and thus
$\lim_{t\rightarrow\infty}X(t)=-\infty$ a.s.\ \cite[Prop.\
XI.2.10]{asmussen:apq}, so the above limit is still valid.

Considering~(\ref{eq:pi_lambda}) for all $k\in E$ and
choosing $x=0$ we indeed obtain
$\Pi(q) \bv^j_+=\bv^j.$
If, however, we pick $k\in \Ep$, then
\begin{equation}\sum_{i=0}^j\frac{1}{i!}x^ie^{(\lmb \matI+\Lmb(q))x}\bv^{j-i}_+-\bv^j_+=\0_+.\end{equation}
Take the right derivative in $x$ at $0$ of both sides to see that
\begin{equation}(\lmb \matI+\Lmb(q))\bv^j_++\bv^{j-1}_+=\0_+,\end{equation}
which shows that
$\bv^0_+,\ldots,\bv^{r-1}_+$ is a Jordan chain of $\a \matI +\Lmb(q)$
corresponding to the eigenvalue~$\lmb$.
\end{proof}

We are now ready to give a proof of our main result, Thm.~\ref{thm_main}.
\begin{proof}\textit{of Theorem~\ref{thm_main}\quad}
Lemma~\ref{eig_lemma} states that $\bv^0_+,\ldots,\bv^{r-1}_+$ is a
classical Jordan chain of the matrix $-\Lmb(q)$. Recall that if
$q=0,\kappa\geq 0$ then $\Lmb(q) \1_+=\0_+$ and $\Pi(q)
\1_+=\1$. Therefore the columns of $V_+$ are linearly
independent \cite[Prop.\ 1.3.4]{gohberg:lancaster:rodman:2006} and
\begin{equation}-\Lmb(q) V_+=V_+\G,\hspace{0.5in}\Pi(q) V_+=V.\end{equation}

Consider the case when $q>0$. Now \cite[Thm.\ 1]{zeros} states that
$\det(F^q(\a))$ has $N_+$ zeros (counting multiplicities)
in~$\Cpos$; see also \cite[Rem.\ 1.2]{zeros}, so the matrices $V_+$ and $\G$
are of size $\Np\times \Np$ by construction~(\ref{eq:Vdef}). Note
there is one-to-one correspondence between the zeros of
$\det(F^q(\a))$ in $\Cpos$ and the eigenvalues of $-\Lmb(q)$
when $q>0$.

Assume now that $q=0$. We only need to show that $\det(F(\a))$ has
$\Np-\ind{\kappa\geq 0}$ zeros (counting multiplicities) in
$\Cpos$. Pick a sequence of $q_n$ converging to $0$ and consider a
sequence of matrix exponents $F^{q_n}(\a)=F(\a)-q_n\matI$ and
transition rate matrices $\Lambda(q_n)$.
From~(\ref{lambda_matrix}) it follows that
$e^{\Lmb(q_n)}\rightarrow e^{\Lmb}$, hence the eigenvalues of
$\Lmb(q_n)$ converge to the eigenvalues of $\Lmb$ (preserving
multiplicities) as $n\rightarrow\infty$. Moreover, all the
eigenvalues of $\Lambda$ have negative real part except a simple
one at 0 if $\kappa\geq 0$. The above mentioned one-to-one
correspondence and the convergence statement of ~\cite[Thm.\
10]{zeros} conclude the proof.
\end{proof}

The above proof strengthens~\cite[Thm.\ 2]{zeros}; we remove the
assumption that $\kappa$ is non-zero and finite.
\begin{corollary}\label{thm_zeros}
It holds that $\det(F(\a))$ has $\Np-\ind{\kappa\geq 0}$ zeros
(counting multiplicities) in $\Cpos$.
\end{corollary}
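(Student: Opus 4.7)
The plan is to read this corollary as a by-product of the proof of Theorem~\ref{thm_main} rather than a separate argument: the zero-counting claim is essentially what had to be checked in order to match the dimensions of $V_+$ and $\G$ there. I would therefore reuse the continuity/correspondence setup already in place.

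First, for $q>0$ I would simply invoke \cite[Thm.~1]{zeros}, which gives that $\det(F^q(\a))$ has exactly $\Np$ zeros (counting multiplicities) in $\Cpos$; here the indicator $\ind{\kappa\geq 0}$ is irrelevant since it is only activated at $q=0$. So the content of the corollary lies entirely in the case $q=0$.

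For $q=0$ I would take a sequence $q_n\downarrow 0$, apply \cite[Thm.~1]{zeros} to each $F^{q_n}(\a)$ to obtain $\Np$ zeros in $\Cpos$, and then pass to the limit. The key facts to combine are: (i) by Lemma~\ref{eig_lemma} and the invertibility of $V_+$ established in the proof of Theorem~\ref{thm_main} for $q_n>0$, there is a one-to-one correspondence (with multiplicities) between these zeros and the eigenvalues of $-\Lmb(q_n)$; (ii) from~(\ref{lambda_matrix}) we get $e^{\Lmb(q_n)}\to e^{\Lmb}$ entrywise, so the spectrum of $\Lmb(q_n)$ converges with multiplicities to the spectrum of $\Lmb$; (iii) by~\cite[Prop.\ XI.2.10]{asmussen:apq} together with Perron--Frobenius applied to the irreducible $\Lmb$, every eigenvalue of $\Lmb$ lies in $\{\Re(\a)\leq 0\}$, and the eigenvalue $0$ occurs exactly when $\kappa\geq 0$, in which case it is simple. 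Combined with the convergence of zeros given in \cite[Thm.~10]{zeros}, this says that out of the $\Np$ zeros of $\det(F^{q_n}(\a))$ in $\Cpos$, exactly $\ind{\kappa\geq 0}$ of them migrate to the imaginary axis (namely to the origin) in the limit, while the others stay strictly inside $\Cpos$.

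The main obstacle I anticipate is making rigorous the statement that no zero ``escapes to infinity'' or crosses the imaginary axis in some spurious way as $q_n\downarrow 0$; this is precisely where I need the invocation of the convergence result \cite[Thm.~10]{zeros}, rather than a naive application of Hurwitz's theorem, because $F(\a)$ is matrix-valued and $\det F^q(\a)$ need not be a polynomial. Once that convergence is granted and combined with the Perron--Frobenius input on $\Lmb$, the count $\Np-\ind{\kappa\geq 0}$ drops out immediately, which is exactly the improvement over \cite[Thm.~2]{zeros} (where finiteness and non-vanishing of $\kappa$ had to be assumed).
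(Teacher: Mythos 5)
Your argument is correct and coincides with the paper's own proof: the corollary is indeed established inside the proof of Theorem~\ref{thm_main}, by counting the $\Np$ zeros of $\det(F^{q_n}(\a))$ in $\Cpos$ for $q_n\downarrow 0$ via \cite[Thm.~1]{zeros}, using the one-to-one correspondence with the eigenvalues of $-\Lmb(q_n)$, the convergence $e^{\Lmb(q_n)}\to e^{\Lmb}$ from~(\ref{lambda_matrix}), the Perron--Frobenius facts about $\Lmb$, and the convergence statement of \cite[Thm.~10]{zeros}. No substantive difference from the paper's route.
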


\section{The Technique}
As an important remark, Thm~\ref{thm_main} can be used as the basis for a technique, which allows one to obtain various useful identities. This technique is used in Sections~\ref{sec:sp_pos_MAP}, Section~\ref{sec:MMBM}, Section~\ref{sec:scale_functions} and can be recovered with some modifications in Section~\ref{sec:integral_equation}.
In short, it consists of the following steps:
\begin{itemize}
  \item use a martingale argument to arrive at an initial
  equation involving the unknown quantities and $F(\a)$,
  \item use the properties of Jordan chains such as Prop.~\ref{jordan_lemma} and Lemma~\ref{jordan_lemma_2} to rewrite the initial equation in terms of $(V,\Gamma)$,
  \item use the special structure of $(V,\Gamma)$, such as
  invertibility of $V$, to simplify the equation,
  \item eliminate the Jordan pair by introducing $\Lambda$ and $\Pi$
  using Thm.~\ref{thm_main} to recover the probabilistic interpretation of the involved matrices and claim uniqueness of the solution.
\end{itemize}
It is noted that our technique can be seen as the extension of the
ideas known as `martingale calculations for MAPs'~\cite[Ch. XI,
4a]{asmussen:apq} to its final and general form. It is important
that no assumptions about the number and simplicity of the
eigenvalues are needed. Moreover, using this technique based on
generalized Jordan chains, one can eliminate Jordan pairs from the
solution and claim uniqueness.

The procedure is illustrated in the next sections.

\section{One-sided Reflection Problems}
The first illustration of our technique concerns the stationary process of the
reflection of spectrally one-sided MAPs.
For a given MAP $(X(t),J(t))$ the {\it reflected process}
$(W(t),J(t))$ is defined through
\begin{equation}W(t)=X(t)-\inf_{0\leq s\leq t}X(s)\wedge 0;\end{equation}
we say that $W(t)$ is the (single-sided) reflection of $X(t)$ at 0.
It is well known that this process has a unique stationary distribution if the
asymptotic drift $\kappa$ is negative, which we
assume in the sequel. Let a pair of random variables $(W,J)$ have
the stationary distribution of $(W(t),J(t))$, and denote
the all-time maximum attained by $X(t)$ through
$\overline{X}=\sup_{t\geq 0}X(t)$. It is an immediate consequence
of \cite[Prop.\ XI.2.11]{asmussen:apq} that
\begin{equation}\label{maxima}
\left(W\mid J=i\right)\hbox{ and }\left(\overline{\hat X}\mid\hat
J(0)=i\right)\hbox{ have the same distribution},
\end{equation}
where $(\hat X(t),\hat J(t))$ is the time-reversed process
characterized by the matrix exponent $\hat
F(\a)=\Delta_\bpi^{-1}F(\a)\t\Delta_\bpi$, where
for any given vector $\bx$ we define $\Delta_{\bx}=\diag[x_1, \ldots, x_N]$.
In the following we identify the distribution of the random variables appearing
in~(\ref{maxima}) for two important classes: spectrally negative
MAPs and spectrally positive MAPs.
\subsection{Spectrally negative MAP}
Let $(X(t),J(t))$ be a spectrally negative MAP with negative
asymptotic drift: $\kappa<0$. It is crucial to observe that
$\left(\overline{X}\mid J(0)=i\right)$ is the life-time of
$J(\tau_x)$, thus it has a phase-type distribution~\cite[Section
III.4]{asmussen:apq} with transition rate matrix $\Lmb$, exit
vector $-\Lmb \1_+$ and initial distribution given by $\be_i\t \, \Pi$. It
is noted that if $i\in\Ed$, then $X(t)$ never hits the interval
$(0,\infty)$ with probability $\p_i(\tau_0=\infty)$, hence
$\left(\overline{X}\mid J(0)=i\right)$ has a mass at zero.

The time-reversed process is again a spectrally negative MAP with
negative asymptotic drift and $\Ed$ being the set of indices of
associated downward subordinators. Thus, with self-evident notation,
the vector of densities of $\left(W\mid J\right)$ at $x>0$ is given by
\begin{equation}{\bs p}(x)=\hat\Pi e^{\hat{\Lmb} x}(-\hat{\Lmb} \1_+).\end{equation}
We conclude that we can express the distribution of the stationary
workload in terms of quantities that follow uniquely from Thm.\
\ref{thm_main}.

\subsection{Spectrally positive MAP}\label{sec:sp_pos_MAP}
Let $(Y(t),J(t))$ be a spectrally positive MAP with
negative asymptotic drift. Define $X(t)=-Y(t)$ and note that
$(X(t),J(t))$ is a spectrally negative MAP with positive
asymptotic drift: $\kappa>0$. Let $F(\a)$ and $\Ep\cup \Ed$ be the
matrix exponent and the partition of the state space of the latter process.
The Laplace-Stieltjes transform of $(W,J)$ is identified
in~\cite{asm_kella_multidimensional} up to a vector $\bl$ of
unknown constants:
\begin{equation}\label{eq:spec_positive}\e [e^{-\a W}\be_J\t]=\a
\bl\t F(\a)^{-1},\end{equation} where $\bl_\downarrow=\0$,
$\bl\t{\1}=\kappa$.

We determine the vector $\blp$ as follows. Let
$\bv^0,\ldots,\bv^{r-1}$ be a Jordan chain of $F(\a)$ associated
with an eigenvalue $\lmb \in\Cpos$. Right multiply both sides of
Eqn.~(\ref{eq:spec_positive}) by $F(\a)$ and use
Prop.~\ref{jordan_lemma} to see that $\lmb \bl\t\bv^j +
\bl\t\bv^{j-1} = 0$ for all $j = 0,\ldots,r-1$; where $\bv^{-1} =
\0$. It trivially follows that $\bl\t V = \kappa\be\t_1$ or
equivalently
\begin{equation}\label{eq:sp_pos_MAP}(\blp)\t=\kappa\be\t_1 (V_+)^{-1}.\end{equation}
It is easy to verify now using Thm.~\ref{thm_main} that
\begin{equation}\label{eq:idnty1}\blp=\kappa\bpi_{\Lmb},\end{equation}
where $\bpi_{\Lmb}$ is the stationary distribution of $\Lmb$, cf.\ \cite[Lemma 2.2]{extremes_dieker}.
Observe that we again succeeded in expressing the  distribution of the stationary workload in terms of quantities that can be determined by applying Thm.\ \ref{thm_main}.

%=========================================================================================================

\section{Two-sided Reflection of MMBM}\label{sec:MMBM}
In this section we consider a Markov-modulated Brownian motion %(MMBM)
$(X(t),J(t))$ of dimension $N$. In this case the matrix exponent (\ref{generator}) has the
special form of a matrix polynomial of second order, that is,
\begin{equation}\label{MMBMgenerator}
 F(\a)=\frac{1}{2}\Delta^2_\bsg \a^2+\Delta_\ba \a+Q,
\end{equation}
where $\a_i\in\R$, $\sigma_i\geq0$ for $i=1,\ldots,N$.

We are interested in a two-sided reflection~\cite{asm_kella_multidimensional} of
$(X(t),J(t))$ with 0 and $b>0$ being lower and upper barriers respectively. This reflected process can be interpreted as a workload process of a queue fed by MMBM, where $b$ is the capacity of the buffer. Let the pair of random variables $(W,J)$ have
the stationary distribution of this process. It is shown
in~\cite{asm_kella_multidimensional} that
\begin{equation}\label{MMBM.mom.gen.func}
    \e\left[e^{\a W}\be_J\t\right]\cdot F(\a)=\a(e^{\a b }\bu\t-\bl\t),
\end{equation}
where $\bu$ and $\bl$ are column vectors of unknown constants. In the
following we show how to compute these constants.
This result completes the investigation on MMBM contained
in~\cite{asm_kella_multidimensional} and extends the previous
works \cite{karandikar:kulkarni:1995,kella_stadje,rogers}.

In order to uniquely characterize the vectors $\bu$ and $\bl$ we exploit the
special structure of MMBM. As MMBM is a continuous
process almost surely,  both $(X(t),J(t))$ and
$(-X(t),J(t))$ are spectrally negative MAPs. Regarding the downward subordinators of $(X(t),J(t))$, let
the sets $E_+$ and $E_\downarrow$, and the cardinalities $N_+$ and $N_\downarrow$ be defined as before.
The downward
subordinators of the process $(-X(t),J(t))$  correspond to
the upward subordinators of the process $(X(t),J(t))$.
Denote the subset of states of $E$ associated to such processes by
$\Eu$. Similarly we denote by $\Em$ the set of the states where
the process $(-X(t),J(t))$ can reach positive records. We
note that it is possible to have that $\Eu\cap\Ed \neq \emptyset$,
as the intersection is given by the states where the process
stays constant. Finally the cardinalities of $\Em$ and $\Eu$ are
denoted through $\Nm$ and $\Nu$ respectively.

Let $\lmb_0,\ldots,\lmb_k$ be the zeros of $\det(F(\a))$ in $\C$ with $\lmb_0=0$.
 Let also $(V_i,\Gamma_i)$ be a Jordan pair of $F(\a)$ corresponding to $\lmb_i$. Define
 \begin{equation}
 V=(V_0,\ldots,V_k),\hspace{0.2in}\Gamma=\diag(\Gamma_0,\ldots,\Gamma_k).
 \end{equation}
\begin{theorem}\label{thm_MMBM_matpol}
The unknown vectors $\bu$ and $\bl$ in Eqn.\ $(\ref{MMBM.mom.gen.func})$ can be uniquely identified in the following way:
$\bud=\0$ and $\blu=\0$ while the vectors $\bup$ and $\blm$ are the solutions of the system of equations
\begin{equation}\label{MMBM_matpol}
   (\bup\t,\blm\t)
    \left(
        \begin{array}{c}
            V_+ \, e^{b\,\G} \\
            -V_-
        \end{array}
    \right)
    = (\bk\t,0,\ldots,0),
\end{equation}
where \begin{equation}\label{dfn:k}\bk\t =   \bpi\t
(\Delta_\ba,\frac{1}{2} \Delta^2_\bsg)\left( \begin{array}{c}
                       V_0\\
                       V_0 \, \G_0\\
                     \end{array}\right)
\end{equation}
is a vector with dimension equal to the multiplicity of the $0$ root of $\det(F(\alpha))$.
\end{theorem}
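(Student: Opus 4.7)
The plan is to follow the four-step technique of Section~4, starting from the Asmussen--Kella identity~(\ref{MMBM.mom.gen.func}). As a first step I would establish $\bud=\0$ and $\blu=\0$, reading this off from the martingale construction underlying~(\ref{MMBM.mom.gen.func}) in~\cite{asm_kella_multidimensional}: the vectors $\bl$ and $\bu$ are local-time intensities at the two barriers, and no local time can be collected at $0$ while $J$ sits in an upward-subordinator state (the process moves away from $0$ on its own there), nor at $b$ while $J$ sits in a downward-subordinator state. This reduces the unknowns to the $\Np+\Nm$ scalars $\bup$ and $\blm$.

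Next I would apply Prop.~\ref{jordan_lemma} to~(\ref{MMBM.mom.gen.func}) along each Jordan chain of $F(\a)$, using the entire scalar factors $f_u(\a)=\a e^{\a b}$ and $f_l(\a)=-\a$. Via Lemma~\ref{jordan_lemma_2}, the $r_i$ scalar relations at an eigenvalue $\lmb_i$ bundle into
\[
\bu\t V_i\,\G_i\,e^{b\G_i} \;-\; \bl\t V_i\,\G_i \;=\; \0\t.
\]
For $\lmb_i\neq 0$ the block $\G_i$ is invertible, so after imposing Step~1 this is exactly the $i$-th block of~(\ref{MMBM_matpol}). For $\lmb_0=0$ the block $\G_0$ is nilpotent, and since $\ker Q=\mathrm{span}(\1)$ there is a single Jordan chain of length $m_0$ (the multiplicity of the zero root); the argument then only pins down the first $m_0-1$ entries of $\bu\t V_0 e^{b\G_0}-\bl\t V_0$, leaving the last entry open.

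To recover that last entry and verify the full formula for $\bk$, I would Taylor-expand~(\ref{MMBM.mom.gen.func}) around $\a=0$. Writing $g_k:=G^{(k)}(0)\t/k!$, so that $g_0=\bpi\t$, the coefficient of $\a^k$ gives
\[
g_k\,Q + g_{k-1}\,\Delta_\ba + \tfrac{1}{2}\,g_{k-2}\,\Delta^2_\bsg \;=\; \text{coeff}_k\!\bigl(\a e^{\a b}\bu\t-\a\bl\t\bigr).
\]
Right-multiplying the $k$-th such identity by $\bv^{j-k}$ and summing over $k=1,\ldots,j$, I would use the Jordan-chain recursion $Q\bv^k=-\Delta_\ba\bv^{k-1}-\tfrac{1}{2}\Delta^2_\bsg\bv^{k-2}$ to make every contribution involving the unknown coefficients $g_1,g_2,\ldots$ telescope to zero. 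What remains is precisely $\bpi\t\Delta_\ba\bv^{j-1}+\tfrac{1}{2}\bpi\t\Delta^2_\bsg\bv^{j-2}$, i.e.\ the $j$-th entry of $\bk$ as defined by~(\ref{dfn:k}); the chain relations further force this to vanish for $j<m_0$, while at $j=m_0$ it is genuinely non-zero precisely because the chain cannot be extended.

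For uniqueness, Corollary~\ref{thm_zeros} applied to both $X$ and $-X$ shows that $\det F(\a)$ has $\Np+\Nm$ zeros in $\C$, so the matrix in~(\ref{MMBM_matpol}) is square. The main obstacle is then proving its invertibility. The natural route is by contradiction: a non-trivial left null vector $(\bup,\blm)$ would, via~(\ref{MMBM.mom.gen.func}) read in reverse, yield an entire candidate row-vector function $\tilde G(\a)\t$ satisfying $\tilde G(\a)\t F(\a)=\a(e^{\a b}\bu\t-\bl\t)$ with $\tilde G(0)\t=\0\t$; combined with the decay $F(\a)^{-1}=O(\a^{-2})$ along the negative real axis and a Phragm\'en--Lindel\"of argument on a suitable sector, this should force $\tilde G\equiv\0$ and hence $\bup=\blm=\0$.
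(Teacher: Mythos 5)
Your derivation of the system itself (existence) is essentially the paper's argument and is correct: applying Prop.~\ref{jordan_lemma} and Lemma~\ref{jordan_lemma_2} to (\ref{MMBM.mom.gen.func}) gives $\bu\t V\G e^{b\G}-\bl\t V\G=\0\t$, inverting $\G_i$ handles the non-zero eigenvalues, and your Taylor expansion of (\ref{MMBM.mom.gen.func}) at $\a=0$ with the telescoping along the null chain $(\1,\bh)$ is an unpacked version of the paper's Lemma~\ref{lem:vec_h} (which differentiates (\ref{MMBM.mom.gen.func}) once and twice at $0$ and uses $Q\bh=-\Delta_\ba\1$); it correctly recovers the entry annihilated by the nilpotent block and identifies it with (\ref{dfn:k}). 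One small caveat: your remark that the last entry is non-zero ``because the chain cannot be extended'' still needs the paper's argument that non-extendability places $\tfrac12\Delta^2_\bsg\1+\Delta_\ba\bh$ outside the column space of $Q$, whose left annihilator is spanned by $\bpi$; this non-vanishing is in fact what the paper uses for uniqueness when $\kappa=0$.

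The genuine gap is your uniqueness step. First, passing from a left null vector of the matrix in (\ref{MMBM_matpol}) to an \emph{entire} function $\tilde G(\a)\t=\a(e^{\a b}\bu\t-\bl\t)F(\a)^{-1}$ is not a matter of ``reading (\ref{MMBM.mom.gen.func}) in reverse'': it requires the Gohberg--Rodman local-data theorem that annihilation of a canonical system of Jordan chains, with the correct partial multiplicities, at \emph{every} zero of $\det F(\a)$ in $\C$ removes all singularities; this is fillable but must be invoked. Second, and fatally, the function-theoretic package you propose cannot force $\tilde G\equiv\0$: the true stationary solution $\e[e^{\a W}\be_J\t]$ is itself a non-zero entire row vector satisfying $\e[e^{\a W}\be_J\t]F(\a)=\a(e^{\a b}\bu\t-\bl\t)$, of exponential type $b$ in $\Cpos$ and bounded (indeed decaying, when all $\sigma_i>0$) along the negative real axis -- exactly the profile of your candidate. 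So any argument using only entirety and these growth bounds proves too much; the only feature distinguishing the difference of two solutions is a linear normalization such as $\tilde G(0)\t=\0\t$, and a Phragm\'en--Lindel\"of bound cannot exploit a single point value (moreover $F(\a)^{-1}=O(\a^{-2})$ fails when some $\sigma_i=0$, and $\tilde G$ genuinely grows like $e^{b\Re\a}$ in the right half-plane, so no half-plane boundedness is available). This is precisely where the paper needs Thm.~\ref{thm_main}: by Lemma~\ref{main_reformulated} the coefficient matrix, after regrouping by $(V^+,\Gp)$ and $(V^-,\Gm)$, factors into the block matrix with identity diagonal blocks and off-diagonal blocks $\Pi^-_+e^{b\Lmb^-}$, $\Pi^+_-e^{b\Lmb^+}$, times $\diag[\Vpp,\Vmm]$; invertibility then follows from irreducible diagonal dominance when $\kappa\neq0$, while for $\kappa=0$ Perron--Frobenius gives a one-dimensional null space which is resolved by the extra equation (\ref{eq:vec_h}), whose right-hand side is non-zero by Lemma~\ref{lem:vec_h}. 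Without a substitute for this probabilistic/Perron--Frobenius input, your uniqueness argument does not go through.
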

%===================================================

Before we give a proof of Thm.~\ref{thm_MMBM_matpol} we provide
some comments on the structure of the pair $(V,\Gamma)$. The
following simple lemma identifies the number of zeros of
$\det(F(\a))$ in different parts of the complex plane, see
also~\cite{karandikar:kulkarni:1995} for the case when $\kappa\neq 0$.
\begin{lemma}\label{lem_zeros}
$\det(F(\a))$ has $\Np-\ind{\kappa\geq 0}$ zeros in $\Cpos$,
$\Nm-\ind{\kappa\leq 0}$ zeros in $\Cneg$ and a zero at 0 of
multiplicity $1+\ind{\kappa=0}$.
\end{lemma}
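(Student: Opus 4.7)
The plan is to reduce the statement to a double application of Cor.~\ref{thm_zeros}, using it once for $(X(t),J(t))$ and once for $(-X(t),J(t))$, and then to pin down the structure of the zeros on the imaginary axis by a degree count combined with a conjugation symmetry and a short Jordan-chain argument.

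First I would note that, since MMBM is continuous almost surely, $(-X(t),J(t))$ is itself a spectrally negative MAP. Its matrix exponent is $F(-\a)$, its asymptotic drift is $-\kappa$, and the role of its downward-subordinator set is played by $\Eu$ of the original process. A direct application of Cor.~\ref{thm_zeros} to $X$ gives $\Np-\ind{\kappa\geq 0}$ zeros of $\det F(\a)$ in $\Cpos$, while the same corollary applied to $-X$ gives $\Nm-\ind{\kappa\leq 0}$ zeros of $\det F(-\a)$ in $\Cpos$, equivalently that many zeros of $\det F(\a)$ in $\Cneg$. This disposes of the first two claims.

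For the imaginary axis I would then perform a degree count. Splitting the states by their Brownian behaviour, write $n_2=|\{i:\sigma_i>0\}|$, $n_1=|\{i:\sigma_i=0,\a_i\neq 0\}|$ and $n_0=|\{i:\sigma_i=0,\a_i=0\}|$. Since the off-diagonal entries of $F(\a)$ in (\ref{MMBMgenerator}) are the constants from $Q$, the only permutation that realizes the top-degree contribution in $\det F(\a)$ is the identity, and irreducibility of $Q$ keeps the corresponding coefficient nonzero, so $\det F(\a)$ has degree exactly $2n_2+n_1$. A short bookkeeping gives $\Np+\Nm=2n_2+n_1$, so the number of zeros of $\det F(\a)$ on the imaginary axis (counted with multiplicity) equals $\ind{\kappa\geq 0}+\ind{\kappa\leq 0}=1+\ind{\kappa=0}$.

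It remains to argue that all those zeros are located at the origin. Because $F(\a)$ has real coefficients, any nonreal zero is accompanied by its conjugate, so zeros on the imaginary axis away from $0$ contribute to the total multiplicity only in even amounts. On the other hand, $F(0)=Q$ is singular with right null vector $\1$, so $0$ is always a zero of $\det F(\a)$ of multiplicity at least one, and when $\kappa=0$ one can in fact produce a length-two Jordan chain $\bv_0=\1$, $\bv_1$ at the origin: the equation $F(0)\bv_1+F'(0)\bv_0=\0$ reduces to $Q\bv_1=-\ba$, which is solvable by the Fredholm alternative precisely because $\bpi\t\ba=\kappa=0$. Matching these lower bounds at $0$ against the exact total from the previous paragraph forces every imaginary zero to lie at $0$ with multiplicity $1+\ind{\kappa=0}$. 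The main subtlety is just arranging this bookkeeping so that the two Cor.~\ref{thm_zeros} counts, the degree of $\det F(\a)$, and the conjugation/Jordan-chain lower bound at $0$ line up exactly; once that is in place the lemma follows without further work.
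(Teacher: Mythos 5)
Your proposal is correct, and its first half coincides with the paper's argument: apply Corollary~\ref{thm_zeros} to $(X(t),J(t))$ and to $(-X(t),J(t))$ (matrix exponent $F(-\a)$, drift $-\kappa$, with $\Nm$ playing the role of $\Np$), and count degrees to see that $\det F(\a)$ has exactly $\Np+\Nm$ zeros in $\C$. Where you genuinely diverge is in handling the imaginary axis: the paper simply cites the result of \cite{zeros} that $\det F(\a)$ has no purely imaginary zeros other than $0$, whereas you derive this (and the multiplicity $1+\ind{\kappa=0}$ at the origin) from scratch, using realness of $F$ so that nonzero imaginary zeros come in conjugate pairs of even total count, the singularity of $F(0)=Q$ for multiplicity at least $1$, and, when $\kappa=0$, the length-two Jordan chain $(\1,\bh)$ with $Q\bh=-\Delta_\ba\1$ solvable exactly because $\bpi\t\Delta_\ba\1=\kappa=0$, combined with the Gohberg--Lancaster--Rodman bound that chain length does not exceed algebraic multiplicity (the same fact the paper invokes, in the reverse direction, in Lemma~\ref{lem:vec_h}). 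Your route buys a self-contained proof exploiting that $F$ is a real matrix polynomial in the MMBM case, at the cost of a slightly longer argument; the citation used in the paper is more general, covering matrix exponents of arbitrary spectrally one-sided MAPs.

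One small imprecision worth fixing: it is not true that the identity is the only permutation attaining the top degree in $\det F(\a)$. If there are two or more states with $\sigma_i=0$ and $\a_i=0$, any permutation acting only within that block also contributes degree $2n_2+n_1$, and the leading coefficient is then $\prod_{\sigma_i>0}\tfrac12\sigma_i^2\,\prod_{\sigma_i=0,\a_i\neq0}\a_i\,\det Q_{00}$, where $Q_{00}$ is the principal submatrix of $Q$ on the zero states. This is still nonzero, since $\Np>0$ makes $Q_{00}$ a proper principal submatrix of an irreducible generator and hence invertible, so your conclusion that $\deg\det F(\a)=2n_2+n_1=\Np+\Nm$ stands; just state the justification this way rather than via uniqueness of the top-degree permutation.
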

\begin{proof}
It is easy to see that $\det(F(\a))$ is a polynomial of degree
$\Np+\Nm$, hence the total number of zeros of $\det(F(\a))$ in
$\C$ counting their multiplicities is $\Np+\Nm$. On the other hand
Corollary~\ref{thm_zeros} states that $\det(F(\a))$ has
$\Np-\ind{\kappa\geq 0}$ zeros in $\Cpos$ and $\Nm-\ind{\kappa\leq
0}$ zeros in $\Cneg$, because $F(-\a)$ is the matrix exponent of a spectrally
negative process $(-X(t),J(t))$ having asymptotic drift $-\kappa$.
The result follows from~\cite{zeros}, where it is shown that
$\det(F(\a))$ does not have zeros on the imaginary axis which are
distinct from 0.
\end{proof}

Next we note that the null-space of $F(0)=Q$ is spanned by~$\1$,
because $Q$ is an irreducible transition rate matrix. Moreover,
scaling matrix $V$ amounts to scaling both sides of
Eqn.~(\ref{MMBM_matpol}) by the same constant, hence we can
assume that the first vector in $V_0$, and thus also in $V$,
is~$\1$. If $\kappa=0$ then according to Lemma~\ref{lem_zeros} the
zero $\lmb_0$ has multiplicity~2, in which case we have that $\Gamma_0$ is a $2\times 2$ matrix, and $V_0$ an $N\times 2$ matrix, given by
\begin{equation}
\Gamma_0=\left(\begin{array}{cc} 0 & 1 \\ 0 & 0
\end{array}\right),\:\:\:\: V_0=(\1,\bh),\end{equation} where the vector $\bh$ solves
\begin{equation}\label{vec_h}F(0)\bh+F'(0)\1=Q\bh+\Delta_\ba\1=\0\end{equation} due to~(\ref{dfn:jordan}). Since $\bpi\t\Delta_\ba\1$ equals the asymptotic drift $\kappa$,
Eqn.~(\ref{dfn:k}) reduces to
\begin{equation}\label{eq:k}
\bk\t=
\left\{\begin{array}{ll}
\kappa, &\hbox{if }\kappa\neq0\\
(0,\bpi\t(\frac{1}{2}\Delta^2_{\bs\sigma}\1+\Delta_\ba\bh )),  &\hbox{if }\kappa=0.
\end{array}\right.
\end{equation}

We now prove the following technical lemma that further specifies $\bk$ in the case $\kappa=0$ .
\begin{lemma}\label{lem:vec_h}
If $\kappa=0$, then $V_0=(\1,\bh)$ and
\begin{equation}\label{eq:vec_h}b\bu\t\1+(\bu\t-\bl\t)\bh=\bpi\t(\frac{1}{2}\Delta^2_{\bs\sigma}\1+\Delta_\ba\bh)\not=0.\end{equation}
\end{lemma}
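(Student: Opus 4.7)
The first assertion $V_0 = (\1, \bh)$ is already established in the discussion directly preceding the lemma: the scaling of $V$ fixes the first column as $\1$, and $\bh$ is then determined (up to the null-space of $Q$) by the Jordan chain relation~(\ref{vec_h}). So only the displayed identity and its non-vanishing remain.

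For the identity, my plan is a direct Taylor expansion of~(\ref{MMBM.mom.gen.func}) at $\a = 0$. Since $W \in [0,b]$ almost surely, the left-hand side is entire in $\a$; the right-hand side is plainly entire; and for MMBM we have exactly $F(\a) = Q + \a \Delta_\ba + \tfrac{1}{2}\a^2 \Delta^2_\bsg$. Matching the coefficients of $\a$ and $\a^2$ yields
\begin{align*}
\e[W \be_J\t]\, Q + \bpi\t \Delta_\ba &= \bu\t - \bl\t, \\
\tfrac{1}{2}\e[W^2 \be_J\t]\, Q + \e[W \be_J\t]\Delta_\ba + \tfrac{1}{2}\bpi\t \Delta^2_\bsg &= b\,\bu\t.
\end{align*}
I would then right-multiply the first by $\bh$ and use $Q\bh = -\Delta_\ba \1$ from~(\ref{vec_h}), right-multiply the second by $\1$ and use $Q\1 = \0$, and add the two resulting scalar identities; the unknown moments of $W$ cancel and the claimed equality falls out.

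For the non-vanishing, I would argue by contradiction via maximality of the Jordan chain. If $\bpi\t(\tfrac{1}{2}\Delta^2_\bsg\1 + \Delta_\ba\bh) = 0$, then since $Q$ is irreducible and so its left null-space is spanned by $\bpi$, the vector $\Delta_\ba\bh + \tfrac{1}{2}\Delta^2_\bsg\1$ would lie in the range of $Q$. There would then exist $\bv_2$ with $Q\bv_2 + \Delta_\ba\bh + \tfrac{1}{2}\Delta^2_\bsg\1 = \0$, which is exactly the next Jordan-chain relation~(\ref{dfn:jordan}) at $\lmb = 0$ for $j=2$. Hence $(\1, \bh, \bv_2)$ would be a Jordan chain of length $3$, forcing the algebraic multiplicity of $0$ in $\det F(\a)$ to be at least $3$; this contradicts Lemma~\ref{lem_zeros}, which gives multiplicity exactly $2$ when $\kappa = 0$.

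The routine steps are matching Taylor coefficients and manipulating the resulting linear relations; the more interesting point, and what I would flag as the main obstacle, is the realization that the non-vanishing assertion is precisely the obstruction to extending the Jordan chain past length $2$, so it follows from the multiplicity count in Lemma~\ref{lem_zeros} rather than from any probabilistic positivity argument.
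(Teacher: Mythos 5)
Your proposal is correct and follows essentially the same route as the paper: matching the first- and second-order Taylor coefficients of~(\ref{MMBM.mom.gen.func}) at $\a=0$ is exactly the paper's differentiating once and twice, followed by the same right-multiplications by $\bh$ and $\1$ and the cancellation of the moments of $W$. The non-vanishing argument is also the paper's: a third Jordan-chain vector at $\lmb_0=0$ would contradict the multiplicity~$2$ from Lemma~\ref{lem_zeros}, so $\frac{1}{2}\Delta^2_\bsg\1+\Delta_\ba\bh$ cannot be paired to zero with $\bpi$ (you merely phrase it contrapositively via the range of the irreducible $Q$).
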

\begin{proof}
Differentiating Eqn.~(\ref{MMBM.mom.gen.func}) at $0$ and
right multiplying by $\bh$, we obtain the identity
\begin{equation}
 (\bu\t-\bl\t)\bh=\e[W \be_J\t] Q \bh + \e[\be_J\t] \Delta_\ba \bh = - \e[W \be_J\t] \Delta_\ba \1 + \e[\be_J\t] \Delta_\ba \bh,
 \end{equation}
where the second equality follows from~(\ref{vec_h}). Differentiating Eqn.~(\ref{MMBM.mom.gen.func}) twice at $0$ and
multiplying by $\1$, we find
\begin{equation}
b \,\bu\t \1 =\e[W \be_J\t] \Delta_\ba \1 + \e[\be_J\t] \frac{1}{2} \Delta^2_\bsg \1,
\end{equation}
which summed with the previous equation gives~(\ref{eq:vec_h}).
We conclude the proof by showing that the resulting expression cannot equal $0$.

It is known, see \cite{gohberg:lancaster:rodman:1982}, % section 1.6 page 32 and Thm 7.1 section 7.1
that the maximum length of a Jordan chain cannot exceed the
algebraic multiplicity of the associated eigenvalue. We therefore have that for
any vector $\bv$ it holds that
\begin{equation}
    \frac{1}{2}\Delta^2_\bsg \, \1 + \Delta_\ba \, \bh + Q \bv \neq \0,
\end{equation}
because otherwise $(\1,\bh,\bv)$ would be a Jordan chain
associated with $\lmb_0$, which has multiplicity 2 (Lemma \ref{lem_zeros}). This implies
that $\frac{1}{2}\Delta^2_\bsg \, \1 + \Delta_\ba \, \bh$ is not
in the column space of $Q$, which is known to be of dimension
$N-1$, because $Q$ is irreducible. Moreover, $\bpi\t Q=\0\t$,
thus $\bpi\t(\frac{1}{2}\Delta^2_\bsg \, \1 + \Delta_\ba \,
\bh)\neq 0$.
\end{proof}

We now construct pairs $(V^+,\Gp)$ and $(V^-,\Gm)$ in the same
way as we constructed $(V,\Gamma)$, but we use only those $(V_i,\Gamma_i)$ for
which $\lmb_i\in\Cpos$ and $\lmb_i\in\Cneg$, respectively.
Moreover, an additional pair $(\1,0)$ is used (as the first pair) in the
construction of $(V^+,\Gp)$ and $(V^-,\Gm)$ if
$\kappa\geq 0$ and $\kappa\leq 0$, respectively. Note that in
view of Lemma~\ref{lem_zeros} the matrix $V^\pm$ has exactly $N_\pm$
columns.

In the following we use $\Lmb^\pm$ and $\Pi^\pm$ to denote the
matrices associated to the first passage process of $(\pm
X(t),J(t))$, and defined according to
Eqns.~(\ref{lambda_matrix}) and~(\ref{pi_matrix}). Next we
present a consequence of Thm.~\ref{thm_main}.
\begin{lemma}\label{main_reformulated}
The following holds
\begin{equation}
\begin{array}{ll}
\Lmbp  = -\Vpp\Gp(\Vpp)^{-1}                &\quad\Pi^+ = V^+(\Vpp)^{-1}\\
\Lmbm = \phantom{-}\Vmm\Gm(\Vmm)^{-1}   &\quad\Pi^-  = V^-(\Vmm)^{-1}.
\end{array}
\end{equation}
\end{lemma}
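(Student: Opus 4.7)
The plan is to obtain both halves of the lemma as instances of Theorem~\ref{thm_main}: the first half by applying the theorem to the MAP $(X(t),J(t))$ itself, and the second half by applying it to the time-negated MAP $(-X(t),J(t))$. Both processes are spectrally negative because MMBM has continuous paths almost surely, which is precisely the setting of Theorem~\ref{thm_main}.

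First I would verify that the pair $(V^+,\G^+)$ defined just before the lemma is exactly the Jordan-pair data prescribed by~(\ref{eq:Vdef}) for $(X(t),J(t))$ at $q=0$: by Lemma~\ref{lem_zeros} the zeros of $\det(F(\alpha))$ in $\Cpos$ are the $\lambda_i\in\Cpos$, and the paper's construction adds $(\1,0)$ exactly when $\kappa\geq 0$, matching~(\ref{eq:Vdef}). The index set $\Ep$ is the same in both places, so Theorem~\ref{thm_main} yields $\Lmbp=-\Vpp\Gp(\Vpp)^{-1}$ and $\Pi^+=V^+(\Vpp)^{-1}$ verbatim.

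For the second line I would apply Theorem~\ref{thm_main} to the process $(-X(t),J(t))$, whose matrix exponent is $\widetilde{F}(\alpha)=F(-\alpha)$, whose asymptotic drift is $-\kappa$, and whose set of non--downward-subordinator states is $\Em$. The theorem produces $\Lmbm=-\widetilde V_{\Em}\widetilde\G\widetilde V_{\Em}^{-1}$ and $\Pi^-=\widetilde V\widetilde V_{\Em}^{-1}$, where $(\widetilde V,\widetilde\G)$ collects Jordan pairs of $F(-\alpha)$ at zeros in $\Cpos$, plus $(\1,0)$ when $-\kappa\geq 0$, i.e.\ $\kappa\leq 0$. The remaining work is to translate this back to Jordan data of $F(\alpha)$ at zeros in $\Cneg$, i.e., to $(V^-,\G^-)$.

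The sign-tracking step is the only delicate point: I would establish, by a direct calculation using $\widetilde F^{(i)}(-\lambda)=(-1)^i F^{(i)}(\lambda)$ in the Jordan-chain identity~(\ref{dfn:jordan}), that if $\bv^0,\ldots,\bv^{r-1}$ is a Jordan chain of $F(\alpha)$ at $\lambda\in\Cneg$, then $(-1)^i\bv^i$ is a Jordan chain of $\widetilde F(\alpha)$ at $-\lambda\in\Cpos$, and conversely. In matrix form this says $\widetilde V=V^-D$ and $\widetilde\G=-D^{-1}\G^- D$ for a signature matrix $D$ whose entries are $\pm 1$ (with the $(\1,0)$ block added on both sides when $\kappa\leq 0$). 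Substituting,
\begin{equation*}
\Lmbm=-\widetilde V_{\Em}\widetilde\G\widetilde V_{\Em}^{-1}=-(V^-D)_{\Em}(-D^{-1}\G^- D)(V^-D)_{\Em}^{-1}=\Vmm\G^-(\Vmm)^{-1},
\end{equation*}
and similarly $\Pi^-=\widetilde V\widetilde V_{\Em}^{-1}=V^-(\Vmm)^{-1}$, since the $D$ factors cancel. The main (and only real) obstacle is this bookkeeping of signs and the analogous similarity on the augmenting $(\1,0)$ block; everything else is a straightforward citation of Theorem~\ref{thm_main}.
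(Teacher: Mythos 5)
Your proposal is correct and follows essentially the same route as the paper: the first line is read off from Theorem~\ref{thm_main}, and the second line rests on the observation that a Jordan chain $\bv^0,\ldots,\bv^{r-1}$ of $F(\a)$ at $\lmb\in\Cneg$ becomes the chain $\bv^0,-\bv^1,\ldots,(-1)^{r-1}\bv^{r-1}$ of $F(-\a)$ at $-\lmb\in\Cpos$, which the paper exploits via Lemma~\ref{eig_lemma} and you package equivalently as conjugation of the Jordan pair by a signature matrix before invoking Theorem~\ref{thm_main} for $(-X(t),J(t))$. The sign bookkeeping you carry out (the $D$ factors cancelling, and the $(\1,0)$ block when $\kappa\le 0$) checks out, so there is no gap.
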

\begin{proof}
The first line is immediate from Thm.~\ref{thm_main} and the second
line follows by noting that if $\bv^0,\ldots,\bv^{r-1}$ is a
Jordan chain of $F(\a)$ then
$\bv^0,-\bv^1,\ldots,(-1)^{r-1}\bv^{r-1}$ is a Jordan chain of
$F(-\a)$. Lemma~\ref{eig_lemma} applied to the process
$(-X(t),J(t))$ entails that $(-\lmb \matI+\Lmb^-)\bv^j_--\bv^{j-1}_-=\0$,
where $\lmb$ is a zero of $\det(F(\a))$ in $\Cneg$. Hence
$\Lmbm \Vmm=\Vmm\Gm$.
\end{proof}

%=======================
We now proceed with the proof of Thm.~\ref{thm_MMBM_matpol}.
\begin{proof}\textit{of Theorem~\ref{thm_MMBM_matpol}\quad}
The proofs of the facts that $\bud=\0$ and $\blu=\0$ and, moreover,
$(\bu\t-\bl\t)\1=\bpi\t\Delta_\ba\1=\kappa$ were already given
in~\cite{asm_kella_multidimensional}. The rest of the proof is
split into two steps. First we show that $(\bu_+\t,\bl_-\t)$
solves~(\ref{MMBM_matpol}), and then we show that the solution is
unique.

\noindent \underline{Step 1}: Lemma~\ref{jordan_lemma_2} and
Prop.~\ref{jordan_lemma} applied to
Eqn.~(\ref{MMBM.mom.gen.func}) imply
\begin{equation}\label{eq:proof_main}\bu\t Ve^{b\Gamma}\Gamma-\bl\t
V\Gamma=\0\t.\end{equation} Let $\hat \Gamma$ be the matrix
$\Gamma$ with Jordan block $\Gamma_0$ replaced with $\matI$.
Suppose first that $\kappa\neq 0$. Then $(V_0,\Gamma_0)=(\1,0)$ and
so~(\ref{eq:proof_main}) can be rewritten as
\begin{equation}
\bu\t Ve^{b\Gamma}\hat\Gamma-\bl\t
V\hat\Gamma=([\bu\t-\bl\t]\1,0,\ldots,0).
\end{equation}
Multiply by $\hat\Gamma^{-1}$ from the right to obtain~(\ref{MMBM_matpol}).
Supposing that $\kappa=0$, we have that adding
\begin{equation}
\hat\bk\t=\bu\t V_0e^{b\Gamma_0}\left(\begin{array}{cc}
  1 & -1 \\
  0 & 1
\end{array}\right)-\bl\t V_0\left(\begin{array}{cc}
  1 & -1 \\
  0 & 1
\end{array}\right)
\end{equation}
to the first two elements of the vectors appearing on the both sides of~(\ref{eq:proof_main}), leads to
\begin{equation}
\bu\t Ve^{b\Gamma}\hat\Gamma-\bl\t V\hat\Gamma=(\hat
\bk\t,0,\ldots,0).
\end{equation}
To complete Step 1 it is now enough to show that
$\hat \bk=\bk$. A simple computation reveals that
$\hat\bk\t=\bu\t(\1,(b-1)\1+\bh)-\bl\t(\1,-\1+\bh)=(0,b\bu\t\1+(\bu\t-\bl\t)\bh)$,
where we used that $(\bu\t-\bl\t)\1=\kappa=0$. Use
Lemma~\ref{lem:vec_h} and~(\ref{eq:k}) to see that $\hat \bk=\bk$.

\noindent \underline{Step 2} (uniqueness): Without loss of generality we
assume that \mbox{$\kappa\geq 0$}. It is easy to see that
$(\bu_+\t,\bl_-\t)$ solves~(\ref{MMBM_matpol}) if and only if
\begin{equation}
(\bu_+\t,-\bl_-\t)\left(\begin{array}{cc}
  V_+^+e^{b\Gp} & V^-_+e^{b\Gm} \\
  V_-^+ & \Vmm
\end{array}\right)=\kappa\be\t_1
\end{equation}
and, in addition when $\kappa=0$ Eqn.~(\ref{eq:vec_h}) holds true
because of the construction of the matrices $V^\pm$ and $\Gamma^\pm$.
Note also that we can right multiply both sides of the above display by the same matrix to obtain
\begin{equation}
(\bu_+\t,-\bl_-\t)\left(\begin{array}{cc}
  V_+^+ & V^-_+e^{b\Gm} \\
  V_-^+e^{-b\Gp} & \Vmm
\end{array}\right)=\kappa\be\t_1\left(\begin{array}{cc}
  e^{-b\Gp} & \matO \\
  \matO & \matI
\end{array}\right).
\end{equation}
Lemma~\ref{main_reformulated} shows that
\begin{equation}
\begin{array}{ll}
V^+_- = \Pi_-^+\Vpp, &\quad\Vpp e^{-b\Gp}=e^{b\Lmbp}\Vpp,\\
V^-_+=\Pi_+^-\Vmm, &\quad\Vmm e^{+b\Gm}=e^{b\Lmbm}\Vmm,
\end{array}
\end{equation}
so we obtain
\begin{equation}\label{eq:Gamma.to.Lambda}
\left(\begin{array}{cc}
  V_+^+ & V^-_+e^{b\Gm} \\
  V_-^+e^{-b\Gp} & \Vmm
\end{array}\right)=\left(\begin{array}{cc}
  \matI & \Pi^-_+e^{b\Lmb^-} \\
  \Pi_-^+e^{b\Lmb^+} & \matI
\end{array}\right)\left(\begin{array}{cc}
  \Vpp & \matO \\
  \matO & \Vmm
\end{array}\right).
\end{equation}
Theorem~\ref{thm_main} states that $\Vpp$ and $\Vmm$ are
invertible matrices. Moreover, $\Pi_-^+e^{b\Lmb^+}$ and
$\Pi^-_+e^{b\Lmb^-}$ are irreducible transition probability
matrices, so the first matrix on the right hand side of
(\ref{eq:Gamma.to.Lambda}), call it $M$, is an irreducible
non-negative matrix, which is non-strictly diagonally dominant. If
$\kappa > 0$, then $\Pi^-_+e^{b\Lmb^-}$ is sub-stochastic, which
implies that $M$ is irreducibly diagonally dominant and hence
invertible~\cite{horn:1990}. If $\kappa=0$, then $M$ has a simple
eigenvalue at 0 by Perron-Frobenius, so
\begin{equation}
(\bu_+\t,-\bl_-\t)\left(\begin{array}{cc}
  \matI & \Pi^-_+e^{b\Lmb^-} \\
  \Pi_-^+e^{b\Lmb^+} & \matI
\end{array}\right)=\0\t
\end{equation}
determines the vector $(\bu_+\t,-\bl_-\t)$ up to a scalar, which is then identified using~(\ref{eq:vec_h}):
\begin{equation}\label{eq:2sided_0drift}
(\bu_+\t,-\bl_-\t)
\left(\begin{array}{c}
  b\1_++\bh_+ \\
  \bh_-
\end{array}\right)
=\bpi\t(\Delta_\ba\bh+\frac{1}{2}\Delta^2_{\bs\sigma}\1),
\end{equation}
which is non-zero by Lemma~\ref{lem:vec_h}.
\end{proof}

Finally, we state a corollary, which identifies (in the case of non-zero asymptotic drift) vectors $\bu_+$ and $\bl_-$ in terms of matrices $\Lmb^\pm$ and $\Pi^\pm$. We believe that there should exist a direct probabilistic argument leading to this identity. Moreover, it is interesting to investigate if such a result holds in the case of countably infinite state space $E$.
\begin{corollary}
It holds that
\begin{equation}\label{eq:idnty2}
(\bu_+\t,-\bl_-\t)\left(\begin{array}{cc}
  \matI & \Pi^-_+e^{b\Lmb^-} \\
  \Pi_-^+e^{b\Lmb^+} & \matI
\end{array}\right)=
\left\{\begin{array}{ll}
\kappa(\bpi\t_{\Lmb^+},\0\t_-),&\hbox{ if }\kappa> 0 \\
\0\t,&\hbox{ if }\kappa=0 \\
\kappa(\0\t_+,\bpi\t_{\Lmb^-}),&\hbox{ if }\kappa< 0
\end{array}\right.,
\end{equation}
where $\bpi_{\Lmb^\pm}$ is the unique stationary distribution of
$\Lmb^\pm$, which is well-defined if $\kappa\not=0$.
\end{corollary}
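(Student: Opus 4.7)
The plan is to build directly on the intermediate steps already executed inside the proof of Theorem~\ref{thm_MMBM_matpol}, handling the three cases $\kappa>0$, $\kappa=0$, $\kappa<0$ separately, with the last following by symmetry.

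For $\kappa>0$ I would start from the identity
\begin{equation*}
(\bu_+\t,-\bl_-\t)\left(\begin{array}{cc}
  V_+^+e^{b\Gp} & V^-_+e^{b\Gm} \\
  V_-^+ & \Vmm
\end{array}\right)=\kappa\be\t_1,
\end{equation*}
established in Step~1 of that proof. Right multiplying by $\diag(e^{-b\Gp},\matI)$ and then inserting the factorisation~(\ref{eq:Gamma.to.Lambda}) puts the matrix on the left into the form appearing in the corollary, at the price of a trailing factor $\diag(\Vpp,\Vmm)$; the right-hand side becomes $\kappa(\be\t_1 e^{-b\Gp},\0\t)$. Because $\kappa>0$ the first Jordan pair in $(V^+,\Gp)$ is the added pair $(\1,0)$, so the top-left block of $\Gp$ is a $1\times 1$ zero block and $\be\t_1 e^{-b\Gp}=\be\t_1$. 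Multiplying through by $\diag((\Vpp)^{-1},(\Vmm)^{-1})$ then yields
\begin{equation*}
(\bu_+\t,-\bl_-\t)\left(\begin{array}{cc}
  \matI & \Pi^-_+e^{b\Lmb^-} \\
  \Pi_-^+e^{b\Lmb^+} & \matI
\end{array}\right)=\kappa\bigl(\be\t_1(\Vpp)^{-1},\,\0\t_-\bigr).
\end{equation*}

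The crux is then to identify $\be\t_1(\Vpp)^{-1}$ with $\bpi_{\Lmb^+}\t$. From Lemma~\ref{main_reformulated} one has $(\Vpp)^{-1}\Lmb^+=-\Gp(\Vpp)^{-1}$; reading off the first row and using $\be\t_1\Gp=\0\t$ (again because the top block of $\Gp$ is a scalar zero) shows that $\be\t_1(\Vpp)^{-1}$ is a left null-vector of $\Lmb^+$. Perron--Frobenius, which applies since $\Lmb^+$ is irreducible and has a simple zero eigenvalue when $\kappa\geq 0$, guarantees that this null space is one-dimensional and spanned by $\bpi_{\Lmb^+}\t$. Normalisation is immediate: the first column of $\Vpp$ is $\1_+$, hence $(\Vpp)^{-1}\1_+=\be_1$ and therefore $\be\t_1(\Vpp)^{-1}\1_+=1=\bpi_{\Lmb^+}\t\1_+$. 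This settles the $\kappa>0$ case.

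The case $\kappa=0$ is essentially for free: it is exactly the vanishing identity recorded in the display immediately preceding~(\ref{eq:2sided_0drift}) in the proof of Theorem~\ref{thm_MMBM_matpol}, with the scalar ambiguity in the solution subsequently pinned down by the normalisation~(\ref{eq:vec_h}). The case $\kappa<0$ would be handled by applying the $\kappa>0$ result to the reversed MMBM $(-X(t),J(t))$, whose asymptotic drift is $-\kappa>0$, for which $\Lmb^+$ and $\Lmb^-$ (and $\Pi^+$ and $\Pi^-$) swap roles and for which the two-sided reflection $W'=b-W$ induces $\bu'=\bl$, $\bl'=\bu$; substituting back reproduces the stated right-hand side. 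I expect the only mildly delicate point to be the index bookkeeping in this time-reversal step, since $\Np$, $\Nm$, $\Ep$, $\Em$ all interchange and the signs have to line up correctly; everything else reduces to the linear-algebraic identifications already in place from the proof of the theorem.
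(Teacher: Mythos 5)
Your proposal is correct and follows essentially the same route as the paper's own proof: it reuses the identity $(\bu_+\t,-\bl_-\t)\,M\,\diag[\Vpp,\Vmm]=\kappa\be_1\t$ established inside the proof of Theorem~\ref{thm_MMBM_matpol}, reduces the claim to $\be_1\t(\Vpp)^{-1}=\bpi\t_{\Lmb^+}$ via Theorem~\ref{thm_main}/Lemma~\ref{main_reformulated}, and disposes of $\kappa=0$ trivially and $\kappa<0$ by symmetry. You merely spell out the details the paper leaves implicit (the left-null-vector and normalisation argument, and the explicit time-reversal bookkeeping for $\kappa<0$), all of which check out.
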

\begin{proof}
Assume that $\kappa> 0$. From the above proof we know that
\begin{equation}
(\bu_+\t,-\bl_-\t)\left(\begin{array}{cc}
  \matI & \Pi^-_+e^{b\Lmb^-} \\
  \Pi_-^+e^{b\Lmb^+} & \matI
\end{array}\right) \left(\begin{array}{cc}
  \Vpp & \matO \\
  \matO & \Vmm
\end{array}\right)=\kappa(\be\t_1,\0\t_-).
\end{equation}
Hence it is enough to check that
$\bpi_{\Lmb^+}=\be\t_1(\Vpp)^{-1}$, which is immediate in view of
Thm.~\ref{thm_main}. The case of $\kappa<0$ is symmetric, and the
case of $\kappa=0$ is trivial.
\end{proof}
%=================================================================
\section{The Scale Functions Associated to MMBM}\label{sec:scale_functions}
The third application of our technique focuses on so-called scale functions associated to MMBM.
Fix non-negative numbers $a$ and $b$, which are not simultaneously 0, and let
 $\tau_x^-=\inf\{t\geq 0:X(t)<-x\}$ be the first passage over $x$ of the process $-X(t)$. We consider
the following matrices
$$
C(a,b)=\e[e^{-q\tau_a};\tau_{a}<\tau_b^-,J(\tau_{a})] {\rm \ and
}D(a,b)=\e[e^{-q\tau_b^-};\tau_{b}^-<\tau_a,J(\tau_{b}^-)]$$
having dimensions $N\times N^+$ and $N\times N^-$ respectively. These
matrices are in fact generalizations of the scale functions in the
terminology of the theory of \levy processes.

Using the strong Markov property we write
\begin{eqnarray*}
C(a,b)&=&\e [e^{-q\tau_a};J(\tau_a)]-\e [e^{-q\tau_a};\tau_b^-<\tau_a,J(\tau_a)]\\
&=&\e[e^{-q\tau_a};J(\tau_a)]-D(a,b)\e[e^{-q\tau_{a+b}};J(\tau_{a+b})],
\end{eqnarray*}
where under the first expectation it is implicit that $\tau_a<\infty$.
 Thus we get
\begin{eqnarray}\label{eq:C_D}
C(a,b)&=&\Pi^+e^{a\Lambda^+}-D(a,b)\Pi^+_-e^{(a+b)\Lambda^+}\\
\nonumber D(a,b)&=&\Pi^-e^{b\Lambda^-}-C(a,b)\Pi^-_+e^{(a+b)\Lambda^-},
\end{eqnarray}
where $\Pi^\pm$ and $\Lambda^\pm$ should be read as $\Pi^\pm(q)$ and $\Lambda^\pm(q)$. The dependence on $q$ is dropped for notational simplicity.
Similarly as in the proof of Thm.~\ref{thm_MMBM_matpol} one readily sees that these equations determine the scale functions uniquely
unless $q=0$ and $\kappa=0$, in which case one additional equation is required. In the following we show how our technique can
be applied here to recover this result, as well as the missing
equality in the case of $q=0,\kappa=0$.

Pick an arbitrary eigenvalue $\lmb$ of $F(\a)-q\matI$ and a
corresponding Jordan chain $\bv_0,\ldots,\bv_{r-1}$. Following the
same steps as in the proof of Lemma~\ref{eig_lemma}, but using
$\tau=\tau_a\wedge\tau^-_b$ instead of $\tau_x$ we obtain
\[\sum_{i=0}^j\frac{1}{i!}\e\left[X(\tau)^ie^{\lambda X(\tau)};\tau<e_q,J(\tau)\right]\bv^{j-i}-\bv^j=\0,\]
which further reduces to
\begin{equation}\label{eq:jordan_1}\nonumber\sum_{i=0}^j\frac{1}{i!}\left[a^ie^{\lambda
a}C(a,b)\bv^{j-i}_++(-b)^ie^{-\lambda
b}D(a,b)\bv_-^{j-i}\right]-\bv^j=\0.\end{equation} A simple
observation presented in Lemma~\ref{jordan_lemma_2} shows that
\begin{equation}\label{eq:jordan}C(a,b)V_+ e^{a\Gamma}+D(a,b)V_-
e^{-b \Gamma}=V,\end{equation} where $(\Gamma,V)$ is a Jordan pair
corresponding to $F(\a)-q\matI$.

Restricting ourselves to the Jordan pair $(V^+,\Gamma^+)$ and
multiplying both sides of~(\ref{eq:jordan}) from the right by
$e^{-a\Gamma^+}{V_+^+}^{-1}$ we obtain
\[C(a,b)=V^+e^{-a\Gamma^+}{V_+^+}^{-1}-D(a,b)V^+_- e^{-(a+b)
\Gamma^+}{V_+^+}^{-1}.\] Finally, application of
Lemma~\ref{main_reformulated} results in the first equation of
(\ref{eq:C_D}), and the second is obtained restricting ourselves
to $(V^-,\Gamma^-)$. In the case $q=0,\kappa=0$ the above
procedure results in the lost of the equation associated to the
null Jordan chain $(\1,\bh)$. Simple computations show that this
equation is
\begin{equation}\label{eq:scale_lost}C(a,b)(a\1_++\bh_+)+D(a,b)(-b\1_-+\bh_-)=\bh,\end{equation} which can
be shown to be linearly independent of the rest using the idea
from the proof of Thm.~\ref{thm_MMBM_matpol}. In conclusion, note
that equation (\ref{eq:jordan}) has a number of advantages over
equation (\ref{eq:C_D}). It is simpler, easier to use in
computations, and, moreover, it has always a unique solution.
%===============================================
\section{Matrix Integral Equation}\label{sec:integral_equation}
In this section we demonstrate how our technique can be used to show in a simple way that $\Lambda(q)$ is a unique
solution of a certain matrix integral equation. This equation appears in~\cite{asmussen:mmbm, extremes_dieker, miyazawa, pacheco_prabhu, pistorius, rogers} and is commonly considered as the main tool in numerically computing $\Lambda(q)$.
Throughout this section it is assumed that $N_+=N$. We use the
following notation
\begin{eqnarray}\label{eq:mat_eq}\nonumber
F^q(M)&=&\Delta_\ba M+\frac{1}{2}\Delta^2_\bsg M^2+\int_{-\infty}^0\Delta_{\bs \nu} (\D x)\left( e^{Mx}-\matI-Mx\ind{x>-1}\right)\\
&\phantom{=}&+\int_{-\infty}^0Q\circ G(\D x)e^{Mx}-q\matI
\end{eqnarray}
where $(a_i,\sigma_i,\nu_i(\D x))$ are the \levy triplets
corresponding to the \levy processes $X_i(\cdot)$ and $G_{ij}(\D
x)$ is the distribution of $U_{ij}$ and $M$ is any given square matrix of size $N$.

Define $\M$ to be a set of all $N\times N$ matrices $Q$, such that
$Q$ is a transition rate matrix of an irreducible Markov chain.
Let also $\{\M_0,\M_1\}$ be a partition of $\M$ into the sets of
defective and non-defective matrices respectively.
\begin{theorem}\label{thm:mat_equation}
$\Lmb(q)$ is the unique solution of $F^q(-M)=\matO$, where
$M\in\M_i$ and $i=\ind{q=0,\kappa\geq 0}$.
\end{theorem}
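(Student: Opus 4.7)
The plan is to verify the two ingredients of the statement separately: first that $\Lmb(q)$ does solve $F^q(-M)=\matO$, and then that no other $M\in\M_i$ does.

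\textbf{Existence.} Since $\Np=N$ in this section, Thm.~\ref{thm_main} gives $\Lmb(q)=-V\G V^{-1}$ with $V$ an invertible $N\times N$ matrix whose columns group into canonical Jordan chains of the analytic matrix function $F^q(\a)$ at its eigenvalues in $\Cpos$, together with the null chain $(\1,0)$ when $q=0,\kappa\geq 0$. Setting $M=-\Lmb(q)=V\G V^{-1}$ and exploiting $M^nV=V\G^n$ and $e^{Mx}V=Ve^{\G x}$ yields
\begin{equation*}
F^q(-\Lmb(q))\,V=\Delta_\ba V\G+\tfrac12\Delta^2_\bsg V\G^2+\int_{-\infty}^0\Delta_{\bs\nu}(\D x)\bigl(Ve^{\G x}-V-V\G x\ind{x>-1}\bigr)+\int_{-\infty}^0 Q\circ G(\D x)\,Ve^{\G x}-qV.
\end{equation*}
Lemma~\ref{jordan_lemma_2} shows that the column of the right-hand side associated to the $(j+1)$-st position of the Jordan block at eigenvalue $\lmb$ equals $\sum_{i=0}^{j}\tfrac{1}{i!}(F^q)^{(i)}(\lmb)\bv^{j-i}$, where $\bv^0,\ldots,\bv^{r-1}$ is the corresponding Jordan chain of $F^q(\a)$ and $(F^q)^{(i)}$ denotes the $i$-th derivative in $\a$. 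By Definition~\ref{dfn:jordan} this sum vanishes for every block and every admissible $j$, so $F^q(-\Lmb(q))V=\matO$, and invertibility of $V$ gives $F^q(-\Lmb(q))=\matO$. The membership $\Lmb(q)\in\M_i$ is recorded in Section~1.2.

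\textbf{Uniqueness.} Let $M\in\M_i$ satisfy $F^q(-M)=\matO$ and decompose $-M=V_M\G_M V_M^{-1}$ with $V_M$ invertible and $\G_M$ in Jordan canonical form. Perron--Frobenius applied to the irreducible rate matrix $M$ confines the spectrum of $-M$ to $\Cpos$ when $i=0$ and to $\Cpos\cup\{0\}$ with a simple $0$ when $i=1$. Running the existence computation in reverse, $F^q(-M)V_M=\matO$ states precisely that each column chain of $V_M$ is a Jordan chain of $F^q(\a)$ at the corresponding diagonal eigenvalue of $\G_M$. Counting algebraic multiplicities, $V_M$ supplies $N$ such vectors in total, which by Corollary~\ref{thm_zeros} (for $q=0$) or \cite[Thm.\ 1]{zeros} (for $q>0$) matches the total multiplicity of zeros of $\det(F^q)$ in $\Cpos$, augmented by $\ind{i=1}$ to account for the zero at $0$. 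Consequently $(V_M,\G_M)$ is a complete canonical Jordan pair of $F^q$ at exactly the same spectrum and with the same partial multiplicities as the pair $(V,\G)$ of Thm.~\ref{thm_main}. Since any two such canonical pairs are related by $V_M=VS$ with $S$ invertible and commuting with $\G$, we conclude $-M=V_M\G V_M^{-1}=V\G V^{-1}=-\Lmb(q)$.

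The main obstacle is the uniqueness half: one has to use the structural hypothesis $M\in\M_i$ together with Perron--Frobenius to pin the spectrum of $-M$ in the correct portion of $\overline{\Cpos}$ (with the zero eigenvalue present precisely when $i=1$), and then invoke the fact that any two complete canonical Jordan pairs of $F^q(\a)$ at a fixed spectrum determine the same matrix $V\G V^{-1}$, a standard consequence of the Gohberg--Rodman theory recalled in Section~2. The existence half, by contrast, is a direct unwinding of Thm.~\ref{thm_main} through Lemma~\ref{jordan_lemma_2}.
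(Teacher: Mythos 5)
Your existence half is essentially the paper's own argument: right-multiply $F^q(-\Lmb(q))$ by the invertible $V$ of Thm.~\ref{thm_main}, read off the columns via Lemma~\ref{jordan_lemma_2}, and use the Jordan-chain relations~(\ref{dfn:jordan}). The uniqueness half, however, contains a genuine gap. From $F^q(-M)V_M=\matO$ you correctly deduce that each block of columns of $V_M$ is a Jordan chain of $F^q(\a)$ at an eigenvalue in $\Cpos$ (or at a simple $0$). But the next two steps rest on facts that are neither proved by you nor available from Section~2. First, the counting step: equality of the totals ($N$ columns of $V_M$ versus $N$ zeros of $\det F^q(\a)$ supplied by Corollary~\ref{thm_zeros} and \cite[Thm.\ 1]{zeros}) does not by itself force, for each individual eigenvalue $\lmb$ of $-M$, its algebraic multiplicity in $\G_M$ to equal the multiplicity of $\lmb$ as a zero of $\det F^q(\a)$, nor the block sizes to be the canonical partial multiplicities; for that you need the nontrivial fact that Jordan chains with linearly independent leading vectors have total length at most the multiplicity of the corresponding zero of the determinant, a piece of the Gohberg--Rodman theory the paper never states. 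Second, and more seriously, the assertion that two complete canonical pairs with the same $\G$ satisfy $V_M=VS$ with $S$ invertible and commuting with $\G$ is not ``recalled in Section~2''; in the present situation (both $V$ and $V_M$ invertible) it is equivalent to the identity $V_M\G V_M^{-1}=V\G V^{-1}$ that you are trying to establish, so unless you quote and verify a precise uniqueness theorem for local Jordan pairs of analytic (not merely polynomial) matrix functions restricted to the spectrum in $\Cpos\cup\{0\}$, the main burden of the uniqueness proof remains undischarged.

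The paper closes uniqueness much more cheaply, with tools you already invoked: once the columns of $V_M$ are known to form Jordan chains of $F^q(\a)$ at eigenvalues $\lmb\in\Cpos$, Lemma~\ref{eig_lemma} (with $\Np=N$) yields $(\lmb\matI+\Lmb(q))\bv^j+\bv^{j-1}=\0$ for every such chain, i.e.\ $\Lmb(q)V_M=-V_M\G_M$; the possible simple eigenvalue $0$, present exactly when $M\in\M_1$ and carried by the eigenvector $\1$, is covered by $\Lmb\1=\0$ in the case $q=0,\kappa\geq 0$. Since $V_M$ is invertible by construction of the Jordan decomposition of $-M$, this gives $M=-V_M\G_M V_M^{-1}=\Lmb(q)$ directly, with no zero counting and no appeal to uniqueness of canonical pairs. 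Replacing your uniqueness step by this argument would both close the gap and shorten the proof.
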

\begin{proof}
In the proof we drop the superscript $q$ to simplify notation. Let
$-M=V\G V^{-1}$ be a Jordan decomposition of the matrix $-M$. Let
also $\bv_0,\ldots,\bv_{r-1}$ be the columns of $V$ corresponding
to some Jordan block of size $r$ and eigenvalue $\lmb$. Note that
$\lmb \in\Cpos$ or $\lmb=0$ in which case it must be simple, because
$M\in\M$.
 Right
multiply~(\ref{eq:mat_eq}) by $V$, note that
$g(-M)=Vg(\G)V^{-1}$ for an entire function $g:\C\rightarrow\C$,
and finally use Lemma~\ref{jordan_lemma_2} to see that the column
of $F(-M)V$ corresponding to $\bv_j$ equals
\begin{equation}\label{eq:matrix_eq_Lambda}\sum_{i=0}^j \frac{1}{i!}F^{(i)}(\lmb)\bv_{j-i},\end{equation} where
we also used the fact that differentiation of $F(\a)$ at
$\lmb,\Re(\lmb)> 0$ can be done under the integral signs and no
differentiation is needed for a simple eigenvalue $\lmb=0$ if such
exists.

If $M=\Lmb$, then the matrices $V$ and $\G$ can be chosen as
in~(\ref{eq:Vdef}). Hence~(\ref{eq:matrix_eq_Lambda}) becomes
$\0$, because $\bv_0,\ldots,\bv_{r-1}$ is a Jordan chain of
$F(\a)$, see~(\ref{dfn:jordan}). But $V$ is an invertible matrix, so that
$F(-\Lmb)=\matO$.

Suppose now that $F(-M)=\matO$ and $M\in\M_i$. Then the vectors
$\bv_0,\ldots,\bv_{r-1}$ form a Jordan chain of $F(\a)$
corresponding to an eigenvalue $\lmb \in\Cpos$ or $\lmb=0$. Finally,
use Lemma~\ref{eig_lemma} to see that $\Lmb V=-V\G$, and hence we have $M=\Lmb$.
\end{proof}
%================================================================
\iffalse
\appendix
\section{\ }
\begin{lemma}\label{lem:martingale}
For any $i\in E$ and $\a\in\Cpos$ the right-continuous process
\begin{equation}
M_\a(t)=\left[\int_0^{t\wedge e_q} e^{\a X(s)}\be\t_{J(s)}\D s\right]\cdot F^q(\a)+\be\t_i-e^{\a X(t)}\mbox{{\large 1}}_{\{t<e_q\}}\be\t_{J(t)}
\end{equation}
is a row vector valued zero mean martingale under the probability
measure $\p_i$.
\end{lemma}
\begin{proof}
It trivially follows from~\cite[Lemma
2.1]{asm_kella_multidimensional} that
\begin{equation}
M^W(\a,t)=e^{\a X(t)}\ind{t<e_q}\be_{J(t)}\t e^{-F^q(\a)t}
\end{equation}
is a row vector valued martingale under $\p_i$. Repeat the steps of
the proof of~\cite[Thm.\ 2.1]{asm_kella_multidimensional} with
$M^W(\a,t)$ defined above and $Y(\cdot)\equiv 0$ to conclude.
As a remark, the fact that the expectation of $M_\a(t)$ under the initial conditions $(X(0),J(0))=(0,i)$ is null follows directly from applying Dynkin's formula (see \cite{karlin:taylor:1981}) to the function $e^{\a X(t)}\mbox{{\large 1}}_{\{t<e_q\}}\be\t_{J(t)}$ of the killed Markov process $(X,J)$.
\end{proof}
\fi

\section*{Acknowledgments}
The first author is partially supported by the Spanish Ministry of Education and Science Grants MTM2007-63140 and SEJ2007-64500 and by the Community of Madrid Grants CCG08-UC3M/ESP-4162 and CCG07-UC3M/ESP-3389. Part of his research was done when he was visiting the Hebrew University of Jerusalem by partial support of  Madrid University Carlos III Grant for Young Researchers' Mobility. The third author is partially supported by grant 964/06 from the Israel Science Foundation and the Vigevani Chair in Statistics.

\small

\end{document}